\newtheorem{theorem}{Theorem}[section]
\newtheorem{lemma}[theorem]{Lemma}
\newtheorem{proposition}[theorem]{Proposition}
\theoremstyle{definition}
\newtheorem{definition}[theorem]{Definition}
\newcommand{\zn}{{\mathbb N}}
\newcommand{\bop}{\bigoplus}
\newcommand{\rf}{{\mathcal R}{\mathcal F}}
\newcommand{\zp}{{\mathbb P}}
\newcommand{\tor}{{\mathbf{Tor}}}
\begin{document}

\title{A Refinement of Multi-dimensional Persistence}
\author{Kevin P.~Knudson}\thanks{Partially supported by the National Security Agency and DARPA}
\email{knudson@math.msstate.edu }
\address{Department  of Mathematics and Statistics, Mississippi State University, Mississippi State, MS 39762}

\keywords{persistent homology, Tor groups, hypertor groups}
 \subjclass{Primary: . Secondary:}
\date{\today}

\begin{abstract}We study the multi-dimensional persistence of Carlsson and Zomorodian \cite{cz} and obtain a finer classification based upon
the higher tor-modules of a persistence module.  We propose a variety structure on the set of isomorphism classes of these modules, and present
several examples.  We also provide a geometric interpretation for the higher tor-modules of homology modules of multi-filtered simplicial
complexes.
\end{abstract}

\maketitle

 \section{Introduction}\label{intro}
 Persistent homology has become a popular tool in the study of point cloud data sets.  Given such a set $X$, one may attempt to approximate the
 topology of $X$ by first placing $\varepsilon$-balls around each point (call the union of the balls $X_\varepsilon$), and then allowing
 $\varepsilon$ to grow.  This yields a nested sequence of spaces $X_\varepsilon\subset X_{\varepsilon'}$, $\varepsilon<\varepsilon'$, and one
 may compute the homology of these spaces.  For $\varepsilon$ small, not much happens since $X_\varepsilon$ is simply a disjoint union of balls,
 but as $\varepsilon$ increases the balls begin to overlap and nontrivial cycles may appear.  One may then measure how long such cycles
 ``persist"; that is, a cycle may appear in $X_\varepsilon$ and be filled in by a boundary in some $X_{\varepsilon'}$,
 $\varepsilon'>\varepsilon$.  If the difference $\varepsilon'-\varepsilon$ is large relative to $\varepsilon$ then one may deduce that the cycle
 is a real topological feature of the set $X$.  For interesting applications of these ideas see, for example,
 \cite{cidz},\cite{czcg},\cite{dgm},\cite{ehz}.

 The abstraction of this idea is the notion of a filtered space.  Given a space $X$, which in this paper will always be a simplicial complex,
 we take an increasing sequence of subcomplexes $$\emptyset=X_{-1}\subset X_0\subset X_1\subset \cdots \subset X_r=X.$$ Let $k$ be a field.
 We then obtain, for $i\ge 0$, a sequence of $k$-vector spaces $$0\to H_i(X_0;k)\to H_i(X_1;k)\to\cdots\to H_i(X;k),$$ and we may observe how
 long a homology class persists in this sequence.  This is encapsulated neatly by Carlsson and Zomorodian \cite{zc} in the following way.  Let
 $M=\bop_{j\ge 0} H_i(X_j;k)$.  This is a module over the polynomial ring $k[x]$ where the action of $x$ on each $H_i(X_j;k)$ is given by the
 map $H_i(X_j;k)\to H_i(X_{j+1};k)$.  The classification of modules over $k[x]$ implies that $M\cong \bop_{j=1}^r x^{\alpha_j}k[x]\oplus
 \bop_{m=1}^p x^{\beta_m}/x^{s_m}$.  In turn, this yields a {\em barcode} for $M$.  This is a set of intervals $[\alpha_j,\infty)$,
 $[\beta_m,\beta_m+s_m]$ that show how long homology classes persist, the infinite intervals corresponding to classes that live in $H_i(X;k)$.

 In applications, however, one may need to consider multiple filtration directions.  For example, the data set in question may have a natural
 filtration of its own (e.g., by density), and then we obtain another filtration direction by growing $\varepsilon$-balls.  These {\em
 multifiltrations} are much more complicated, and the complete classification of \cite{zc} has no analogue.  Indeed, in \cite{cz}, the authors
 show that for multifiltrations there is no complete discrete invariant analogous to the barcode.  There are some discrete invariants, but there
 is also a continuous piece obtained as a quotient of an algebraic variety, $\rf(\xi_0,\xi_1)$.  This is summarized in Section \ref{variety}
 below.

 The main idea in \cite{cz} is to considered spaces $X$ filtered by $X_v\subset X$ for $v\in\zn^n$.  For a fixed $i\ge 0$, one then obtains a
 $k[x_1,\dots ,x_n]$-module $M=\bop_{v\in\zn^n} H_i(X_v;k)$ just as in the $n=1$ case.  Modules over $A_n=k[x_1,\dots ,x_n]$, $n\ge 2$, do not admit
 a neat classification, however, and that is where the trouble lies.  The authors consider two multisets $\xi_0$ and $\xi_1$ which indicate the
 degrees in $\zn^n$ where homology classes are born and where they die, respectively.  The problem is that there may be many (even uncountably
 many) nonisomorphic modules with the same $\xi_0$ and $\xi_1$.  These are parametrized bye the quotient of the variety $\rf(\xi_0,\xi_1)$ by
 the action of an algebraic group.  In the case $n=1$, this quotient space is always finite (see Theorem \ref{onevar} below), as one would
 expect given the classification of these modules discussed above.

 The multisets $\xi_0$ and $\xi_1$ consist of the elements in $\zn^n$ where the module $M$ has generators and relations, respectively.  These
 are obtained by computing the modules $\text{Tor}_0^{A_n}(M,k)$ and $\text{Tor}_1^{A_n}(M,k)$.  When $n=1$, these are the only nontrivial Tor
 groups, but for $n\ge 2$, there may be more.  These higher Tor modules are the main objects of study in this paper.  For $i\ge 0$, let
 $\xi_i(M)$ be the multiset of elements in $\zn^n$ where generators of $\text{Tor}_i^{A_n}(M,k)$ occur.  Hilbert's Syzygy Theorem implies that
 $\xi_i=\emptyset$ for $i>n$ and so we obtain a finite family of discrete invariants, $\xi_0(M),\xi_1(M),\dots ,\xi_n(M)$.  Using these, we may
 partition the set $\rf(\xi_0,\xi_1)$ of \cite{cz} into subsets $\rf(\xi_2,\dots ,\xi_n)$ consisting of those modules $M$ having
 $\xi_i(M)=\xi_i$.  Let $F(\xi_0)$ be the free $A_n$-module with basis $\xi_0$ and let $GL(F(\xi_0))$ be the group of degree-preserving
 automorphisms of $F(\xi_0)$.  This group acts on the various $\rf(\xi_2,\dots ,\xi_n)$ and we have the following result.

 \medskip

 \noindent {\bf Theorem 4.3} {\em There is a projective variety $Y_{\xi_2,\dots ,\xi_n}$ and a map
 $$\varphi:GL(F(\xi_0))\backslash\rf(\xi_2,\dots ,\xi_n)\to Y_{\xi_2,\dots ,\xi_n}.$$}

 Often, the map $\varphi$ is injective and we may use it to give the quotient set  the structure
 of an algebraic variety.  In turn, this yields a variety structure on the full quotient
  $GL(F(\xi_0))\backslash\rf(\xi_0,\xi_1)$ by taking the disjoint union
 over the possible $\xi_2,\dots ,\xi_n$.  Morally, this is what one wants.  However, this is {\em not} the quotient space obtained by viewing
 $\rf(\xi_0,\xi_1)$ as a variety and then taking the quotient by $GL(F(\xi_0))$.  The difference in our approach is that we have lost
 information about certain degeneracies among the elements of $\rf(\xi_0,\xi_1)$ at the expense of gaining a variety structure on the quotient.
 An example of this is given in Section \ref{czex}.

 It may happen, however, that some $\varphi$ is not injective.  This may occur, for example, if there are generators for $M$ that are not
 co-located and the relations lie in unfortunate locations.  We provide a remedy for this in Section \ref{different}.  For an example, see
 Section \ref{containment}.

 The remainder of the paper explores geometric interpretations of the $\xi_i$, $i\ge 2$.  To do this, we back up a step and consider modules of
 chains on a multifiltered space, rather than the individual homology modules.  Let $X_\bullet$ be a complex filtered by $\zn^n$ and for each
 $i\ge 0$, denote by $C_i(X_\bullet)$ the $A_n$-module of $i$-chains on $X_\bullet$: $C_i(X_\bullet) = \bop_{v\in\zn^n} C_i(X_v;k)$.  We then
 have a chain complex $C_\bullet(X_\bullet)$ in the category of graded $A_n$-modules and the associated hypertor modules
 $\tor_j^{A_n}(C_\bullet(X_\bullet),k)$.  By examining the spectral sequences that compute these modules, we obtain a natural map
 $$d^2_{2q}:\text{Tor}_2^{A_n}(H_q(X_\bullet),k)\to\text{Tor}_0^{A_n}(H_{q+1}(X_\bullet),k).$$ This gives us a geometric interpretation of
 $\xi_2(H_q(X_\bullet))$:  elements in $\xi_2$ possibly correspond to locations of generators of $H_{q+1}(X_\bullet)$.  In Theorem
 \ref{tor0tor2}, we describe the kernel and image of this map.

 The higher differentials in this spectral sequence provide a mechanism to relate elements of $\text{Tor}_\ell^{A_n}(H_q(X_\bullet),k)$ to
 elements of $\text{Tor}_0^{A_n}(H_{q+\ell-1}(X_\bullet),k)$.  We shall not investigate these more subtle relationships here.

 In the final section, we use the other spectral sequence to obtain an interpretation of the hypertor modules
 $\tor_j^{A_n}(C_\bullet(X_\bullet),k)$.  If the filtration is such that at most one simplex gets added at a time as we move from one degree to
 another adjacent to it, then we have (Theorem \ref{hypertor})
 $$\tor_\ell^{A_n}(C_\bullet(X_\bullet),k)=\bop_{p+q=\ell}\text{Tor}_q^{A_n}(C_p(X_\bullet),k).$$  Elements on the right hand side may be
 thought of as ``virtual" $(p+q)$-cells that fill in duplicated relations among cells of lower dimension.  We also show that, by dropping the
 grading in the vector spaces $\text{Tor}_q^{A_n}(C_p(X_\bullet),k)$, we may define a boundary operator $\partial:\tor_\ell\to\tor_{\ell-1}$ so
 that the homology of the resulting complex recovers $H_\bullet(X;k)$.  Examples are discussed.

 Finally, we note that there may be a relationship between the $\xi_i$, $i\ge 2$, and the rank invariant of \cite{cz}.  This will be explored
 elsewhere.

 \medskip

 \noindent {\bf Acknowledgements.} Most of the results in this paper were obtained while the author was a member of the Mathematical Sciences
 Research Institute in Berkeley, CA, during the Fall 2006 program on Computational Applications of Algebraic Topology.  The author thanks the
 organizers for putting together an excellent program and MSRI for its hospitality.  The author also thanks Gunnar Carlsson, John Harer, and
 Mark Walker for many helpful conversations about the material in this paper.

\section{Multi-filtered Spaces and Persistence Modules}\label{filtration}
In this section we establish notation and make some definitions.  We keep the notation and terminology of \cite{cz}.  A {\em multiset} is a pair
$(S,\mu)$, where $S$ is a set and $\mu:S\to\zn$ specifies the multiplicity of each element of $S$.  For example, the multiset $\{a,a,a,b,b,c\}$
has $\mu(a)=3$, $\mu(b)=2$ and $\mu(c)=1$; we represent this as $(\{a,b,c\},\mu)$ or as $\{(a,3),(b,2),(c,1)\}$.

Given elements $u,v\in\zn^n$, we say $u\lesssim v$ if $u_i\le v_i$ for each $1\le i\le n$.  If $(S,\mu)$ is a multiset with $S\subseteq \zn^n$,
then $\lesssim$ is a quasi-partial order on $(S,\mu)$.  If $k$ is a field, denote by $k[x_1,\dots ,x_n]$ the ring of polynomials in the
variables $x_1,\dots ,x_n$ with coefficients in $k$.  If $x_1^{v_1}\cdots x_n^{v_n}$ is a monomial, we denote it by $x^v$, where $v=(v_1,\dots
,v_n)\in \zn^n$.

An {\em $n$-graded ring} is a ring $R$ equipped with a decomposition $R=\bop_{v\in\zn^n} R_v$ such that $R_u\cdot R_v\subseteq R_{u+v}$. The
example we shall use is the polynomial ring $A_n=k[x_1,\dots ,x_n]$, graded by setting $A_v=kx^v$ for $v\in\zn^n$. An {\em $n$-graded module}
over an $n$-graded ring $R$ is an $R$-module $M$ with a decomposition $M=\bop_{v\in\zn^n}M_v$ such that $R_u\cdot M_v\subseteq M_{u+v}$.

Let $X$ be a topological space.  A {\em multi-filtration} of $X$ is a collection of subspaces $\{X_v\}_{v\in\zn^n}$ such that if $u\lesssim
v_1,v_2\lesssim w$, the diagram of inclusions
$$\xymatrix{
 X_{v_1}\ar[r] & X_w \\
 X_u\ar[u]\ar[r] & X_{v_2}\ar[u]}$$ commutes.  Typically, $X$ is a finite simplicial complex, in which case we assume that each $X_u$ is a
 subcomplex.  Moreover, we assume that the filtration is eventually constant in any direction; that is, if we fix all but the $i$-th coordinate
 in $\zn^n$, then for all $v\in\zn^n$ with $v_i$ sufficiently large the filtration is constant in the $i$ direction.  Also, we assume the
 filtration is finite in the sense that there is some $w\in\zn^n$ with $X_w=X$.

 Now, given a multi-filtered space $X$, we may calculate the homology of each subspace $X_v$ with coefficients in a field $k$.  The inclusion
 maps among the various subspaces yield maps on homology.  This information is encapsulated in the following definition.

 \begin{definition}\label{persistencemodule} A {\em persistence module} $M$ is a family of  $k$-vector spaces
 $\{M_v\}_{v\in\zn^n}$ together with homomorphisms $\varphi_{u,v}:M_u\to M_v$ for all $u\lesssim v$ such that if $u\lesssim v\lesssim w$ we have
 $\varphi_{v,w}\circ\varphi_{u,v} = \varphi_{u,w}$. A persistence module $M$ is {\em finite} if each $M_u$ is finite-dimensional.
 Any persistence module $M$ has the structure of an $n$-graded module over $A_n$ where the
 action of a monomial is determined by requiring $x^{v-u}:M_u\to M_v$ to be $\varphi_{u,v}$ whenever $u\lesssim v$.
 \end{definition}

 Conversely, given a finitely generated $n$-graded $A_n$-module, we get a persistence module by taking $\varphi_{u,v}:M_u\to M_v$ to be the map given
 by the action of $x^{v-u}$ on $M_u$.  Thus, there is an equivalence of categories between finite persistence modules and finitely generated
 $n$-graded $A_n$-modules.

 Note that for each $j\ge 0$, the homology modules $\{H_j(X_v)\}_{v\in\zn^n}$, together with the induced maps, yield a finite persistence
 module over $A_n$.

 Recall that if $M$ is an $n$-graded module and $v\in\zn^n$, the {\em shifted module} $M(v)$ is defined by $M(v)_u=M_{u-v}$ for all $u\in
 \zn^n$.

 \begin{definition} If $\xi$ is a multiset in $\zn^n$, the $n$-graded $k$-vector space with basis $\xi$ is the module
 $$V(\xi)=\bop_{(v,i)\in\xi} k(v).$$
 \end{definition}

 \noindent This is an $A_n$-module where the action of each variable is identically zero.

 \begin{definition} If $\xi$ is a multiset in $\zn^n$, the free $n$-graded $A_n$-module with basis $\xi$ is the module
 $$F(\xi)=\bop_{(v,i)\in\xi} k[x_1,\dots ,x_n](v).$$
 \end{definition}

 \noindent Note that each $F(\xi)_v$ is a $k$-vector space of dimension equal to $\#\{(u,i)\in\xi | u\lesssim v\}$.

 \begin{definition} If $M$ is a free $n$-graded object with basis $\xi$, we call $\xi$ the {\em type} of $M$ and denote it by $\xi(M)$.
 \end{definition}

 \subsection{Automorphisms} We now turn to automorphisms.

 \begin{definition} Let $\mu\in GL(V(\xi))$.  We say that $\mu$ {\em respects the grading} if for any $(v,i)\in\xi$, $\mu(v)$ lies in the span
 of elements $u_{ij}\in\xi$ with $u_{ij}\lesssim v$.  Denote by $GL_\lesssim(V(\xi))$ the set of all such automorphisms.
 \end{definition}

 Note that $GL_\lesssim(V(\xi))$ is an algebraic subgroup of $GL(V(\xi))$.  In fact, more is true: every element has upper triangular block
 form.  To see this, note that we may order the basis of $V(\xi)$ in the following way.  For each $(v,i)\in\xi$, order the basis elements of
 $V(\xi)_v$ arbitrarily and then order the sets according to $\lesssim$. If $v_1$ and $v_2$ are incomparable under $\lesssim$, then we order
 them arbitrarily.  For example, if $n=2$, $(0,1)$ and $(1,0)$ are incomparable, so we may choose either one to come first in the order.
  Then with respect to this ordering, any $\mu\in GL_\lesssim(V(\xi))$
 has block form
 $$\mu = \left(\begin{array}{cccc}
           L_1 & V_{12} & \cdots & V_{1r} \\
           0  & L_2  & \cdots & V_{2r} \\
           0  &  0  & \ddots & V_{r-1,r} \\
           0  &  0 & \cdots & L_r
           \end{array}\right)$$ where each $L_j\in GL(V(v_j,i_j))$ and $V_{j\ell}\in M_{i_j,i_\ell}(k)$. Note that if the degrees $v_k$ and
           $v_\ell$ are incomparable, then $V_{k\ell}=0$.

 Now, denote by $GL(F(\xi))$ the group of automorphisms of the free $n$-graded $A_n$-module $F(\xi)$ that respect the grading.  Then we have the
 following.

 \begin{proposition} The group $GL(F(\xi))$ is isomorphic to $GL_\lesssim(V(\xi))$.
 \end{proposition}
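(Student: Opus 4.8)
The plan is to identify both groups with the unit group of a single monoid, by writing down the matrix of a grading-respecting homomorphism explicitly. First I would record a basis for the homogeneous pieces of $F(\xi)$: for $v\in\zn^n$, the $k$-vector space $F(\xi)_v$ has basis $\{x^{v-u}e_{(u,j)}\colon (u,j)\in\xi,\ u\lesssim v\}$, where $e_{(u,j)}$ denotes the free generator of $F(\xi)$ sitting in degree $u$. (Each $x^{v-u}e_{(u,j)}$ is homogeneous of degree $(v-u)+u=v$, it is a genuine element of $F(\xi)$ precisely when $v-u\in\zn^n$, i.e.\ $u\lesssim v$, and these elements are manifestly independent and spanning; this is just the dimension count noted after the definition of $F(\xi)$, made basis-explicit.)

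Next I would set up the correspondence. A graded $A_n$-module endomorphism $f$ of $F(\xi)$ is the same datum as an arbitrary choice, for each $(v,i)\in\xi$, of an element $f(e_{(v,i)})\in F(\xi)_v$ (freeness together with the degree-preserving constraint), and expanding in the basis above yields scalars $c_{(v,i),(u,j)}\in k$, necessarily vanishing unless $u\lesssim v$, with $f(e_{(v,i)})=\sum_{u\lesssim v}c_{(v,i),(u,j)}\,x^{v-u}e_{(u,j)}$. Define $\Phi(f)$ to be the $k$-endomorphism of $V(\xi)$ with $\Phi(f)(\bar e_{(v,i)})=\sum_{u\lesssim v}c_{(v,i),(u,j)}\,\bar e_{(u,j)}$, where $\bar e_{(u,j)}$ is the degree-$u$ basis vector of $V(\xi)$. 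Since each $\bar e_{(u,j)}$ that occurs has degree $u\lesssim v$, the element $\Phi(f)(\bar e_{(v,i)})$ lies in the span of basis vectors of degree $\lesssim v$, so $\Phi(f)$ respects the grading in the sense of the definition; conversely, any grading-respecting $k$-endomorphism of $V(\xi)$ has matrix coefficients supported on pairs with $u\lesssim v$, and feeding them back in as coefficients of $x^{v-u}e_{(u,j)}$ produces a graded $A_n$-endomorphism of $F(\xi)$. Thus $\Phi$ is a bijection between the monoid of graded $A_n$-endomorphisms of $F(\xi)$ and the monoid $\mathrm{End}_\lesssim(V(\xi))$ of grading-respecting $k$-endomorphisms of $V(\xi)$, and it sends $\mathrm{id}$ to $\mathrm{id}$.

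The one genuine computation is that $\Phi$ is multiplicative, and that is the step to watch. Given graded endomorphisms $f,g$ with coefficient arrays $c^f,c^g$, I would expand $f\bigl(g(e_{(v,i)})\bigr)$ using $A_n$-linearity of $f$ and the monomial identity $x^{v-w}\cdot x^{w-u}=x^{v-u}$, obtaining $c^{f\circ g}_{(v,i),(u,j)}=\sum_{(w,l)}c^g_{(v,i),(w,l)}\,c^f_{(w,l),(u,j)}$, which is exactly the coefficient of $\bar e_{(u,j)}$ in $\Phi(f)\bigl(\Phi(g)(\bar e_{(v,i)})\bigr)$. Hence $\Phi(f\circ g)=\Phi(f)\circ\Phi(g)$, so $\Phi$ is an isomorphism of monoids and therefore restricts to an isomorphism of their groups of units. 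Finally I would identify those unit groups: on the $F(\xi)$ side the graded automorphisms are precisely the units (the inverse of a graded bijection is automatically graded), and on the $V(\xi)$ side the units of $\mathrm{End}_\lesssim(V(\xi))$ are precisely $GL_\lesssim(V(\xi))$, using that the latter is a group, closed under inversion, as already observed via the block-upper-triangular form. This yields $GL(F(\xi))\cong GL_\lesssim(V(\xi))$. I do not anticipate any obstacle beyond keeping the indices straight in the composition identity; everything else is formal bookkeeping.
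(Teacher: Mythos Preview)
Your proof is correct, and the underlying idea---recording a graded homomorphism of $F(\xi)$ by its array of scalar coefficients $c_{(v,i),(u,j)}$ supported on pairs with $u\lesssim v$---is the same as the paper's. The organization differs, however. The paper argues directly at the level of automorphisms, observing that the matrix of a graded automorphism has block upper-triangular form with scalar entries, and invokes invertibility (what $\mu^{-1}$ would have to do) to rule out nonconstant polynomial entries. You instead set up a bijection $\Phi$ between the full monoids of graded $A_n$-endomorphisms of $F(\xi)$ and grading-respecting $k$-endomorphisms of $V(\xi)$, verify multiplicativity via the monomial identity $x^{v-w}x^{w-u}=x^{v-u}$, and then restrict to units. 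Your route is a bit more explicit and avoids the slightly murky appeal to $\mu^{-1}$: the constraint that entries are scalars (times a fixed monomial) already follows from the graded-endomorphism condition alone, which your expansion of $f(e_{(v,i)})$ in the basis $\{x^{v-u}e_{(u,j)}\}$ makes transparent. The paper's version is shorter but leaves more to the reader; yours makes the isomorphism and its compatibility with composition fully visible.
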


 \begin{proof} By ordering bases as above, we see that any automorphism that respects the grading of $F(\xi)$ has an upper triangular block
 form. Observe that each diagonal block has entries in $k$ since $\mu$ applied to any basis element cannot increase the grade.  Finally, note
 that, likewise, any block above the diagonal consists only of elements in $k$; for if a nonconstant polynomial is applied to a basis element,
 the grade increases. But then $\mu^{-1}$ would have to undo this action, in effect multiplying by a monomial of the form $x^{-u}$, which cannot
 happen.
 \end{proof}

 \subsection{A Family of Discrete Invariants} Suppose $M$ is a finitely-generated $n$-graded $A_n$-module.  A minimal generating set for $M$ may
 be obtained as follows.  Let $I_n$ be the ideal $(x_1,\dots ,x_n)\subset A_n$.  Let $$V(M)=M/I_nM = k\otimes_{A_n} M.$$  This is a
 finite-dimensional $n$-graded vector space, and as such it has a basis $\xi(V(M))$.  This lifts to a minimal generating set for $M$ which we
 denote by $\xi_0(M)$.

 Note that there is a canonical surjection $\varphi_M:F(\xi_0(M))\to M$.  Set $F_0=F(\xi_0(M))$.  The kernel of $\varphi_M$ is not free in
 general, but we may choose a minimal free module $F_1$ so that the sequence $F_1\to F_0\to M\to 0$ is exact.  Continuing in this way we get a
 minimal free resolution
 $$0\to F_n\to F_{n-1}\to \cdots \to F_1\to F_0\to M\to 0$$ in the category of finitely-generated $n$-graded $A_n$-modules.  That this
 resolution terminates at $F_n$ is a consequence of Hilbert's Syzygy Theorem \cite{eisenbud}, p.478.  We may use this resolution to compute Tor
 groups.  Note that for each $i\ge 0$, $\text{Tor}_i^{A_n}(M,k)$ is an $n$-graded vector space.

 \begin{definition} If $M$ is a finitely-generated $n$-graded $A_n$-module, set
 $$\xi_i(M)=\xi(\text{Tor}_i^{A_n}(M,k)).$$
 \end{definition}

 Note that the $\xi_i(M)$ are multisets in $\zn^n$ and $\xi_i(M)=\emptyset$ for $i>n$.

 \section{Relation Families and the Associated Variety}\label{variety}
 \subsection{Relation Families} Let us focus on the invariants $\xi_0(M)$ and $\xi_1(M)$ for a moment.  These correspond to a minimal generating
 set and minimal set of relations for $M$, respectively.  The following construction appears in \cite{cz}.

 \begin{definition} Let $F(\xi_0)$ and $F(\xi_1)$ be free $n$-graded $A_n$-modules.  A {\em relation family} is a collection
 $\{V_v\}_{v\in\xi_1}$ of vector spaces such that
 \begin{enumerate}
 \item $V_v\subseteq F(\xi_0)_v$;

 \item $\dim V_v = \dim F(\xi_1)_v$;

 \item if $u,v\in\xi_1$ with $u\lesssim v$, then $x^{v-u}\cdot V_u\subseteq V_v$.
 \end{enumerate}
 \end{definition}

 The collection of all such relation families is denoted by $\rf(\xi_0,\xi_1)$.

 \begin{lemma} The group $GL(F(\xi_0))$ acts on the left on $\rf(\xi_0,\xi_1)$.
 \end{lemma}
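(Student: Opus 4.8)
The plan is to define the action by letting each automorphism act graded-piece by graded-piece on the subspaces making up a relation family, and then to check that the three defining conditions of a relation family are preserved and that the axioms of a group action hold.

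First I would fix $\mu\in GL(F(\xi_0))$. Since $\mu$ respects the grading, it restricts for each $v\in\zn^n$ to a linear automorphism $\mu_v:F(\xi_0)_v\to F(\xi_0)_v$. Given a relation family $\{V_v\}_{v\in\xi_1}$, I would set $(\mu\cdot V)_v=\mu_v(V_v)$ for each $v\in\xi_1$ and claim this is again a relation family. Condition (1) is immediate because $\mu_v$ maps $F(\xi_0)_v$ into itself. Condition (2) follows because $\mu_v$ is a linear isomorphism, so $\dim\mu_v(V_v)=\dim V_v=\dim F(\xi_1)_v$. For condition (3), take $u\lesssim v$ in $\xi_1$; multiplication by $x^{v-u}$ sends $F(\xi_0)_u$ to $F(\xi_0)_v$, and since $\mu$ is a map of $A_n$-modules it commutes with this multiplication. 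Hence
$$x^{v-u}\cdot\mu_u(V_u)=\mu_v\!\left(x^{v-u}\cdot V_u\right)\subseteq\mu_v(V_v),$$
using $x^{v-u}\cdot V_u\subseteq V_v$ from the hypothesis that $\{V_v\}$ is a relation family. This is the only step with any content, and it is exactly the point at which $A_n$-linearity of $\mu$ (as opposed to mere degree-preserving $k$-linearity) is used; I expect it to be the crux of the argument, though it is short.

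Finally I would verify the group-action axioms. The identity of $GL(F(\xi_0))$ acts as the identity since $(\mathrm{id})_v=\mathrm{id}_{F(\xi_0)_v}$. For $\mu,\nu\in GL(F(\xi_0))$, the restriction of $\mu\nu$ to degree $v$ is $\mu_v\circ\nu_v$, so $\big((\mu\nu)\cdot V\big)_v=\mu_v(\nu_v(V_v))=\big(\mu\cdot(\nu\cdot V)\big)_v$, which gives associativity of the action. This completes the proof; the argument is entirely formal once condition (3) is checked, so no further obstacle is anticipated.
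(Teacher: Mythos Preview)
Your proof is correct and is essentially a careful unpacking of the paper's one-line argument: the paper just observes that $\mu\in GL(F(\xi_0))$ induces an automorphism of the presentation $F(\xi_1)\to F(\xi_0)\to M\to 0$ and hence sends relation families to relation families, while you verify conditions (1)--(3) and the group-action axioms explicitly. The key point in both is the $A_n$-linearity of $\mu$, which you correctly isolate as the reason condition~(3) is preserved.
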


 \begin{proof} Any $\mu\in GL(F(\xi_0))$ induces an automorphism of the exact sequence
 $$F(\xi_1)\to F(\xi_0)\to M\to 0$$ and hence maps a relation family to another relation family.
 \end{proof}

 The canonical example of a relation family is given by a finitely-generated $n$-graded $A_n$-module $M$.  The map $\psi_M$ in the exact
 sequence $$F_1\stackrel{\psi_M}{\to} F_0\to M\to 0$$ gives rise to a relation family $\eta(\psi_M)$ by setting $V_v=\psi_M((F_1)_v))\subseteq
 (F_0)_v$. In \cite{cz} the authors prove the following.

 \begin{theorem} {\em (\cite{cz}, Theorem 2)} Let $\xi_0,\xi_1$ be multisets of elements from $\zn^n$ and let $[M]$ be the isomorphism class of
 finitely-generated $n$-graded $A_n$-modules $M$ with $\xi_0(M)=\xi_0$ and $\xi_1(M)=\xi_1$.  Then the assignment $[M]\mapsto \eta(\psi_M)$ is a
 bijection from the set of isomorphism classes to the set of orbits $GL(F(\xi_0))\backslash\rf(\xi_0,\xi_1)$. \hfill $\qed$
 \end{theorem}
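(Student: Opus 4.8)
The plan is to factor the claimed bijection through the more transparent notion of a graded submodule of $F(\xi_0)$, reducing the module‑theoretic content to projectivity of free modules and graded Nakayama.
\emph{From modules to submodules, and well‑definedness.} Given $M$ with $\xi_0(M)=\xi_0$ and $\xi_1(M)=\xi_1$, lift a minimal generating set to a graded surjection $\pi\colon F(\xi_0)\to M$ and put $K_M=\ker\pi$. Since $\pi$ induces an isomorphism $F(\xi_0)\otimes_{A_n}k\xrightarrow{\sim}M\otimes_{A_n}k$, graded Nakayama gives $K_M\subseteq I_nF(\xi_0)$ and shows the degrees of a minimal generating set of $K_M$ are exactly $\xi_1$. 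Two choices of $\pi$ differ by an element of $GL(F(\xi_0))$: lift $\mathrm{id}_M$ through each surjection to obtain mutually inverse graded endomorphisms of $F(\xi_0)$, which are automorphisms by Nakayama. Hence $GL(F(\xi_0))\cdot K_M$ depends only on $[M]$. The minimal cover $\psi_M\colon F(\xi_1)\to K_M$ is onto in every degree, so $\psi_M((F(\xi_1))_v)=(K_M)_v$ and $\eta(\psi_M)=\{(K_M)_v\}_{v\in\xi_1}$; granting (see below) that this collection satisfies conditions (1)--(3), the orbit $GL(F(\xi_0))\cdot\eta(\psi_M)$ is a well‑defined invariant of $[M]$.

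\emph{An inverse construction, and bijectivity.} Conversely, from a relation family $\{V_v\}_{v\in\xi_1}$ form $K=\sum_{v\in\xi_1}A_n\!\cdot\!V_v\subseteq F(\xi_0)$; in each degree $w$ one has $K_w=\sum_{v\in\xi_1,\,v\lesssim w}x^{w-v}V_v$, and condition (3) gives $K_v=V_v$ for $v\in\xi_1$ (because $(I_nK)_v=\sum_{u\in\xi_1,\,u\lneq v}x^{v-u}V_u\subseteq V_v$, whence $K_v=V_v+(I_nK)_v=V_v$). Granting the bookkeeping below---that $K\subseteq I_nF(\xi_0)$ and $K$ is minimally generated exactly in the degrees $\xi_1$ with the prescribed multiplicities---set $\Phi(\{V_v\})=F(\xi_0)/K$, a module with invariants $\xi_0,\xi_1$. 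The assignments $[M]\mapsto GL(F(\xi_0))\cdot\{(K_M)_v\}_{v\in\xi_1}$ and $\Phi$ are then mutually inverse: $\Phi(\eta(\psi_M))=F(\xi_0)/K_M\cong M$ since $K_M=\sum_{v\in\xi_1}A_n(K_M)_v$, and $\eta(\psi_{\Phi(\{V_v\})})=\{K_v\}_{v\in\xi_1}=\{V_v\}_{v\in\xi_1}$. Both assignments are $GL(F(\xi_0))$‑equivariant. For injectivity on orbits: if $\mu\in GL(F(\xi_0))$ carries $\{(K_M)_v\}_{v\in\xi_1}$ to $\{(K_{M'})_v\}_{v\in\xi_1}$ then---both submodules being generated in degrees $\xi_1$---$\mu(K_M)=K_{M'}$, so $\mu$ descends to a graded isomorphism $F(\xi_0)/K_M\xrightarrow{\sim}F(\xi_0)/K_{M'}$, i.e.\ $[M]=[M']$; surjectivity is witnessed by $\Phi$. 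Since $[M]\mapsto\eta(\psi_M)$ and $[M]\mapsto\{(K_M)_v\}_{v\in\xi_1}$ agree, this completes the identification.

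\emph{The main obstacle.} The delicate step, invoked at several points above, is the minimality bookkeeping. In one direction: that $\eta(\psi_M)$ actually satisfies condition (2), i.e.\ that $\psi_M$ is injective on $(F(\xi_1))_v$ for each $v\in\xi_1$. Minimality of the resolution supplies only $\ker\psi_M\subseteq I_nF(\xi_1)$, which need not annihilate $(\ker\psi_M)_v$ when $v$ dominates one of the syzygy degrees $\xi_1(K_M)$ of $K_M$; so one must rule out such a coincidence between the generating and syzygy degrees of $K_M$. In the other direction: that the submodule $K$ built from a relation family lies in $I_nF(\xi_0)$ and acquires no spurious minimal generators outside $\xi_1$, so that $\Phi(\{V_v\})$ genuinely has relation invariant $\xi_1$; this too can fail for badly‑placed relation families unless $\xi_1$ is suitably separated from $\xi_0$. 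Establishing these facts---likely under, or by making explicit, a hypothesis on how the elements of $\xi_1$ sit relative to the lower‑degree data---is where the real work lies. The remaining ingredients, namely the two lifting arguments, graded Nakayama, the degreewise formula for $K_w$, and equivariance, are routine.
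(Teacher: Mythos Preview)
The paper does not prove this theorem at all: it is stated with a terminal $\qed$ and attributed to \cite{cz}, Theorem~2. There is therefore no argument in the paper to compare against. What can be assessed is whether your sketch would succeed as a proof of the statement \emph{as formulated here}, and on that point your instincts are sound: the ``minimality bookkeeping'' you flag is not merely delicate but actually fails without further hypotheses.

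Concretely, take $n=2$, $\xi_0=\{((0,0),2)\}$, and $M=A_2/(x,y)\oplus A_2/(x^2y^2)$. Then $K_M=(x,y)\oplus(x^2y^2)\subset A_2^2$ is minimally generated by $(x,0),(y,0),(0,x^2y^2)$, so $\xi_1(M)=\{(1,0),(0,1),(2,2)\}$. The Koszul syzygy $y\,e_{(1,0)}-x\,e_{(0,1)}$ lies in $\ker\psi_M$ at degree $(1,1)\lesssim(2,2)$, and indeed
\[
\dim V_{(2,2)}=\dim\psi_M\bigl(F(\xi_1)_{(2,2)}\bigr)=\dim (K_M)_{(2,2)}=2,
\qquad
\dim F(\xi_1)_{(2,2)}=3,
\]
so condition~(2) of Definition~3.1 fails and $\eta(\psi_M)\notin\rf(\xi_0,\xi_1)$. (In fact $F(\xi_0)_{(2,2)}$ is only $2$-dimensional, so no $3$-dimensional subspace exists there.) Thus the assignment $[M]\mapsto\eta(\psi_M)$ is not even well-defined into $\rf(\xi_0,\xi_1)$ for these $\xi_0,\xi_1$, exactly the failure mode you anticipated when a degree in $\xi_2$ lies below a degree in $\xi_1$.

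Your overall architecture---pass through the graded submodule $K_M\subset F(\xi_0)$, use graded Nakayama for uniqueness up to $GL(F(\xi_0))$, and build an inverse by $K=\sum_{v\in\xi_1}A_n\cdot V_v$---is the natural one and is essentially how the result is argued in the source. The discrepancy is not in your reasoning but in the transcription: the version of the relation-family condition~(2) recorded in this paper does not match what is needed, and your proposal correctly isolates this. A correct formulation replaces $\dim V_v=\dim F(\xi_1)_v$ by the requirement that $V_v$ have dimension $\dim(K)_v$ for the intended $K$ (equivalently, that the new contribution at $v$ have dimension equal to the multiplicity of $v$ in $\xi_1$), or else restricts to $(\xi_0,\xi_1)$ for which no element of $\xi_2$ can sit below an element of $\xi_1$. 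With either fix, the routine steps you list go through.
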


 \subsection{The Variety Structure}\label{var}
 Note that the set $\rf(\xi_0,\xi_1)$ can be given the structure of an algebraic variety.  Indeed, given a relation family
 $\{V_v\}_{v\in\xi_1}$, each $V_v$ determines a subspace of the vector space $F(\xi_0)_v$ and so we have an inclusion of sets
 $$j:\rf(\xi_0,\xi_1)\to \prod_{(v,i)\in\xi_1} \text{Gr}_{\dim F(\xi_1)_v}(F(\xi_0)_v),$$ where $\text{Gr}_m(W)$ denotes the Grassmann variety
 of $m$-planes in $W$.

 \begin{proposition} The set $\rf(\xi_0,\xi_1)$ is a variety via the structure induced by the map $j$.
 \end{proposition}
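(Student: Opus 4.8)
The plan is to exhibit $j$ as an injection of $\rf(\xi_0,\xi_1)$ onto a Zariski-closed subset of the product of Grassmannians $P=\prod_{(v,i)\in\xi_1}\text{Gr}_{\dim F(\xi_1)_v}(F(\xi_0)_v)$, and then transport the resulting closed-subvariety structure back along $j$. Injectivity of $j$ is immediate, since a relation family is literally the tuple of subspaces $(V_v)$ and is recovered from its image. (When a degree $v$ occurs with multiplicity in $\xi_1$ the corresponding factor of $P$ is repeated, and $j$ lands in the small diagonal of those repeated factors, which is closed; so without loss of generality we may take one factor per distinct degree $v$ appearing in $\xi_1$.) Moreover, conditions (1) and (2) in the definition of a relation family say exactly that the $(v,i)$-coordinate is a point of $\text{Gr}_{\dim F(\xi_1)_v}(F(\xi_0)_v)$, i.e. they are built into $P$. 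Hence the image of $j$ is precisely the subset of $P$ on which condition (3) holds, and the whole proposition reduces to showing that (3) defines a closed subset.

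To do that, I would fix a comparable pair $u\lesssim v$ with $u,v\in\xi_1$. Multiplication by the monomial $x^{v-u}$ is a fixed $k$-linear map $\mu_{u,v}\colon F(\xi_0)_u\to F(\xi_0)_v$, and on $P$ it induces a morphism of vector bundles
$$\Phi_{u,v}\colon \mathcal{S}_u\hookrightarrow F(\xi_0)_u\otimes\mathcal{O}_P \xrightarrow{\ \mu_{u,v}\ } F(\xi_0)_v\otimes\mathcal{O}_P \twoheadrightarrow \mathcal{Q}_v,$$
where $\mathcal{S}_u$ is the pullback of the tautological subbundle on the $u$-factor and $\mathcal{Q}_v$ the pullback of the tautological quotient bundle on the $v$-factor. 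At a point $(V_w)_w\in P$ the fibre of $\Phi_{u,v}$ is the composite $V_u\hookrightarrow F(\xi_0)_u\to F(\xi_0)_v\to F(\xi_0)_v/V_v$, which vanishes if and only if $x^{v-u}\cdot V_u\subseteq V_v$. The locus where a morphism of vector bundles vanishes is Zariski closed — in a local trivialization it is cut out by the matrix entries of $\Phi_{u,v}$ — so $Z_{u,v}=\{\,p\in P : \Phi_{u,v}|_p=0\,\}$ is closed in $P$. (Concretely, in a product of affine charts where $V_u$ and $V_v$ are the column spans of $\bigl(\begin{smallmatrix}I\\ A_u\end{smallmatrix}\bigr)$ and $\bigl(\begin{smallmatrix}I\\ A_v\end{smallmatrix}\bigr)$, condition (3) for this pair becomes a system of bilinear equations in the entries of $A_u$ and $A_v$.)

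Since $\xi_1$ is finite there are only finitely many comparable pairs, so $j(\rf(\xi_0,\xi_1))=\bigcap_{u\lesssim v}Z_{u,v}$ is closed in $P$ (the case $u=v$ contributes nothing), and $\rf(\xi_0,\xi_1)$, identified via $j$ with this intersection, is a closed subvariety of a product of Grassmannians. I do not expect a genuine obstacle here: this is the standard fact that an incidence condition of the form ``a tautological subspace is carried into another tautological subspace by a fixed linear map'' is Zariski closed. The only point requiring actual care is the bookkeeping forced by $\xi_1$ being a multiset, handled as above by passing to the small diagonal, and checking that the finitely many closed conditions $Z_{u,v}$ indeed cut out exactly the relation families rather than something larger.
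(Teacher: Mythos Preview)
Your proposal is correct and follows exactly the approach of the paper, which proves the proposition in a single sentence: ``This follows from the fact that the containment conditions $x^{v-u}\cdot V_u\subseteq V_v$ are algebraic.'' You have simply spelled out in full detail what that sentence asserts, including the bundle-map formulation and the bookkeeping for multiplicities in $\xi_1$; nothing in your argument diverges from the paper's intended reasoning.
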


 \begin{proof} This follows from the fact that the containment conditions $x^{v-u}\cdot V_u\subseteq V_v$ are algebraic.
 \end{proof}

 Moreover, it is clear that the action of $GL(F(\xi_0))$ on $\rf(\xi_0,\xi_1)$ is an algebraic action.  Unfortunately, the quotient space
 $GL(F(\xi_0))\backslash\rf(\xi_0,\xi_1)$ is not, in general, a variety.  In Section \ref{extend} we present one possible remedy for this.  We
 shall discuss some examples of the quotient spaces $GL(F(\xi_0))\backslash\rf(\xi_0,\xi_1)$ in Section \ref{examples}.

 \subsection{The Case $n=1$} Before proceeding, we first discuss what happens when $n=1$; that is, when we have a single filtration direction.
 In the case of a filtered space, this corresponds to the study of ordinary persistent homology \cite{zc}.  A complete classification of
 persistence modules over $k[x]$ is known---the invariants $\xi_0$ and $\xi_1$ yield a {\em barcode} showing births and deaths of homology
 classes.

 The variety $\rf(\xi_0,\xi_1)$ makes sense when $n=1$, however, and we discuss here the structure of $GL(F(\xi_0))\backslash\rf(\xi_0,\xi_1)$.

 As a simple example, suppose we are given that $\xi_0=\{0,0,2\}$ and $\xi_1=\{4\}$; that is, we have three generators appearing in filtration
 levels $0,0$, and $2$ and a single relation in level $4$.  In Figure \ref{points} we show two different filtered spaces whose $H_0$ modules
 have these $\xi_0$ and $\xi_1$.  The associated $H_0$ modules are $k[x]\oplus k[x]/x^4\oplus x^2k[x]$ and $k[x]\oplus k[x] \oplus x^2k[x]/x^4$,
 respectively.  Of course, these modules are not isomorphic, but specifying only the locations of generators and relations does not reveal this.
 Rather, the orbit space plays this role.

 \begin{figure}
 \centerline{\includegraphics[width=4.5in]{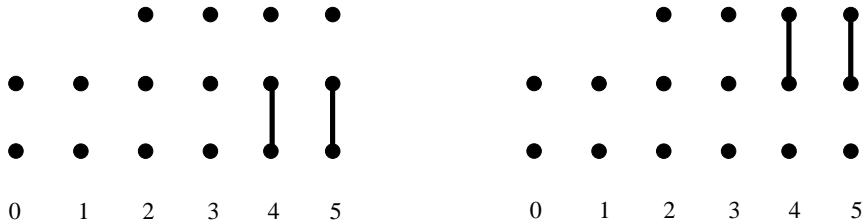}}
 \caption{\label{points}Two filtered spaces associated to $\xi_0=\{0,0,2\}$ and $\xi_1=\{4\}$}
 \end{figure}

 In this example, since there are three generators and only a single relation in a degree greater than the degrees of all the generators, we see
 that $\rf(\xi_0,\xi_1)=\text{Gr}_1(k^3)=\zp^2$.  The group $GL(F(\xi_0))$ has block form
 $$\left(\begin{array}{cc}
        GL_2(k) & V_{12} \\
        0  & GL_1(k)
        \end{array}\right).$$
 It is easy to see that $GL(F(\xi_0))$ has two orbits on $\zp^2$ and so $GL(F(\xi_0))\backslash\rf(\xi_0,\xi_1)$ consists of two points, as we
 expected.

 In general, we have the following result.

 \begin{theorem}\label{onevar} If $n=1$, the orbit space $GL(F(\xi_0))\backslash\rf(\xi_0,\xi_1)$ consists of a finite set of points.
 \end{theorem}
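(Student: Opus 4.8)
The plan is to exploit the classification theorem for finitely generated graded $k[x]$-modules: any such module is a finite direct sum of shifted free pieces $x^{a}k[x]$ and shifted torsion pieces $x^{b}k[x]/x^{c}$, so the isomorphism type is completely determined by $\xi_0$ and $\xi_1$ together with the combinatorial data of \emph{which} generator each relation is attached to. By Theorem~\ref{relfam} (Theorem~2 of \cite{cz}) the orbit space $GL(F(\xi_0))\backslash\rf(\xi_0,\xi_1)$ is in bijection with the set of isomorphism classes of modules having the prescribed $\xi_0,\xi_1$. So it suffices to show that for $n=1$ there are only finitely many such isomorphism classes. First I would fix $\xi_0=\{a_1\le a_2\le\cdots\le a_g\}$ and $\xi_1=\{b_1\le\cdots\le b_m\}$. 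Using the classification, a module $M$ with these invariants is built by: choosing an injection (with multiplicity) from the relations into the generators, i.e.\ a partial matching that assigns to each $b_j$ a distinct generator index $\sigma(j)$ with $a_{\sigma(j)}<b_j$ (the torsion summand $x^{a_{\sigma(j)}}k[x]/x^{b_j-a_{\sigma(j)}}$ reads off the interval $[a_{\sigma(j)},b_j]$), with the unmatched generators contributing free summands $x^{a_i}k[x]$. Since there are at most $g!/(g-m)!$ such injections (finitely many), there are only finitely many isomorphism classes, hence finitely many orbits, hence the orbit space is a finite set of points.

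The one point that needs a genuine argument — and which I expect to be the main obstacle — is the passage from ``relation family'' to ``combinatorial matching datum.'' A relation family is a tuple of subspaces $V_{b_j}\subseteq F(\xi_0)_{b_j}$ satisfying the compatibility $x^{b_j-b_i}V_{b_i}\subseteq V_{b_j}$, and it is \emph{not} literally a matching; rather, after quotienting by $GL(F(\xi_0))$, one must show every orbit contains a ``monomial'' representative in which each $V_{b_j}$ is spanned by (images of) standard basis vectors. The way I would do this is to argue directly on the module side: given $M$ with $\xi_0(M)=\xi_0$, $\xi_1(M)=\xi_1$, invoke the $k[x]$-classification to write $M\cong\bigoplus_i x^{a_i}k[x]\ \oplus\ \bigoplus_j x^{c_j}k[x]/x^{s_j}$; then observe that reading off generators gives exactly $\xi_0$ (one generator in degree $a_i$ per free summand, one in degree $c_j$ per torsion summand) and reading off first syzygies gives exactly $\xi_1$ (one relation in degree $c_j+s_j$ per torsion summand, none from the free summands). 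Matching degrees shows $\{c_j+s_j\}=\{b_j\}$ as multisets and $\{c_j\}\subseteq\{a_i\}$ as multisets, so the entire module is recovered from $\xi_0$, $\xi_1$, and the choice of which $c_j$ (equivalently which generator) goes with which $b_j$ — a finite choice. Feeding this back through the bijection of Theorem~\ref{relfam} gives finiteness of the orbit space. This side-steps having to analyze the $GL(F(\xi_0))$-action on the Grassmannian product directly, which is where the bookkeeping would otherwise get heavy.

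A brief sanity check against the worked example in the text: with $\xi_0=\{0,0,2\}$ and $\xi_1=\{4\}$, the relation in degree $4$ can be attached to a generator in degree $0$ (giving $k[x]\oplus k[x]/x^4\oplus x^2k[x]$) or to the generator in degree $2$ (giving $k[x]\oplus k[x]\oplus x^2k[x]/x^4$) — two matchings, two isomorphism classes, two orbits, matching the assertion that $GL(F(\xi_0))\backslash\zp^2$ has two points. More generally the count $g!/(g-m)!$ is a crude upper bound; the actual number of orbits is the number of matchings $\sigma$ satisfying the inequalities $a_{\sigma(j)}<b_j$ modulo the obvious symmetries coming from equal degrees, but for the theorem as stated we only need finiteness, which the crude bound already supplies.
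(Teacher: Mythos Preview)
Your argument is correct and takes a genuinely different route from the paper's. You pass through the bijection of Theorem~2 of \cite{cz} to reduce to counting isomorphism classes of graded $k[x]$-modules with prescribed $\xi_0,\xi_1$, and then invoke the structure theorem over the PID $k[x]$ to see that any such module is determined by a matching of relations to generators---a finite datum. The paper instead works directly with the geometry of the action: it identifies $\rf(\xi_0,\xi_1)$ with a partial flag variety $G/P$ inside $GL_m(k)$ (using that for $n=1$ the containment conditions force the relation subspaces to nest), observes that $GL(F(\xi_0))$ is parabolic and in particular contains the Borel $B$, and then appeals to the finiteness of $B\backslash G/P$ (the Schubert cell decomposition). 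Your approach is more elementary and more in keeping with the barcode philosophy already recalled in the introduction; the paper's approach, on the other hand, yields the extra structural information that $\rf(\xi_0,\xi_1)$ is a flag variety and that the $GL(F(\xi_0))$-orbits are unions of Schubert cells, which may be of independent interest for understanding the stratification even though it is not needed for the bare finiteness statement.
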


 \begin{proof} Since $n=1$, we are dealing with $\zn$-graded modules over the principal ideal domain $k[x]$.  The classification of modules over
 this ring tells us that in the exact sequence $$0\to F(\xi_1)\to F(\xi_0)\to M\to 0,$$ we must have $\text{rank}(F(\xi_1))\le
 \text{rank}(F(\xi_0))$.

 Let $m$ denote the number of elements in $\xi_0$ so that $GL(F(\xi_0))$ is a subgroup of $GL_m(k)$.  In this case, $GL(F(\xi_0))$ is actually a
 parabolic subgroup of $G=GL_m(k)$ since the ordering in $\zn$ is linear; that is, there are no nonzero blocks above the diagonal since all
 generator degrees are comparable.  We claim that $\rf(\xi_0,\xi_1)$ is a flag variety; that is, $\rf(\xi_0,\xi_1)=G/P$ for some parabolic
 $P\subset G$.  Order the basis elements in $\xi_1$ by degree: $e_{11},\dots ,e_{1\ell_1},e_{21}\dots ,e_{2\ell_2},\dots ,e_{r1},\dots
 ,e_{r\ell_r}$, with $\deg e_{ij}=d_i$, $j=1,\dots ,\ell_i$, and $d_1<d_2<\cdots <d_r$.  Then we have
 $$\rf(\xi_0,\xi_1)\subseteq \text{Gr}_{\ell_1}(F(\xi_0)_{d_1}) \times \text{Gr}_{\ell_1+\ell_2}(F(\xi_0)_{d_2}) \times \cdots \times
 \text{Gr}_{(\sum_{j\le r} \ell_j)}(F(\xi_0)_{d_r}).$$ Now observe that the containment conditions imply that if $(p_1,p_2,\dots ,p_r)$ lies in
 $\rf(\xi_0,\xi_1)$,
 where $\dim p_j=\sum_{i\le j}\ell_i$, then $p_1\subset p_2\subset \cdots \subset p_r$; that is, $p_1\subset p_2\subset \cdots \subset p_r$ is a
 flag in $k^m$.  It follows that we may identify $\rf(\xi_0,\xi_1)$ with the flag variety $F(d_1,d_2,\dots ,d_r)$ of flags
 $V_1\subset\cdots\subset V_r$ in $k^m$ with $\dim V_j=\sum_{j\le i} \ell_i$.
  Thus there is a parabolic subgroup $P\subset G$ with $F(d_1,\dots ,d_r)=G/P$.

 It is well-known (see e.g. \cite{borel}) that if $B$ is the group of upper triangular matrices, then the quotient $B\backslash G/P$ is finite
 (indeed, the $B$-orbits in $G/P$ give a decomposition of the projective variety $G/P$ into Schubert cells).
  Since $B\subseteq GL(F(\xi_0))$, we see that the quotient
 $$GL(F(\xi_0))\backslash\rf(\xi_0,\xi_1)\approx GL(F(\xi_0))\backslash G/P$$ is finite.
 \end{proof}

 \section{Partitions of $\rf(\xi_0,\xi_1)$ and the Associated Varieties}\label{extend}
 When $n\ge 2$, the quotient $GL(F(\xi_0))\backslash\rf(\xi_0,\xi_1)$ is often not a variety (see \cite{cz} and Section \ref{examples} below).
 In this section we present one remedy for this which has the advantage of partitioning the quotient set
 $GL(F(\xi_0))\backslash\rf(\xi_0,\xi_1)$ into a collection of varieties.  This variety structure is different from that of \cite{cz} discussed
 in Section \ref{var}, however, and does not carry quite as much information.

 \subsection{A Partition of $\rf(\xi_0,\xi_1)$}  Recall that the set $\rf(\xi_0,\xi_1)$ consists of all relation families $\{V_v\}_{v\in\xi_1}$.
  Recall further that these are in one-to-one correspondence with finitely-generated $n$-graded $A_n$-modules $M$ with $\xi_0(M)=\xi_0$ and
  $\xi_1(M)=\xi_1$.  Suppose we are given a collection $\xi_0,\xi_1,\xi_2,\dots ,\xi_n$ of multisets in $\zn^n$.

  \begin{definition} For multisets $\xi_0,\xi_1,\dots ,\xi_n$ define
  $$\rf(\xi_2,\dots ,\xi_n) = \{M\in\rf(\xi_0,\xi_1) | \xi(\text{Tor}_i^{A_n}(M,k))=\xi_i, i=2,\dots ,n\}.$$
  \end{definition}

  Note that only finitely many $\rf(\xi_2,\dots ,\xi_n)$ are nonempty.  Indeed, once $\xi_0$ and $\xi_1$ are fixed, there are only finitely many
  possibilities for locations and numbers of syzygies among the elements in $\xi_1$, showing that there are only finitely many possibilities for
  $\xi_2$. In turn there are only finitely many possibilities for $\xi_3$, and so on.  Note also that it is possible to have some
  $\xi_i=\emptyset$ (and then $\xi_j=\emptyset$ for $j\ge i$).  Moreover, the various $\rf(\xi_2,\dots ,\xi_n)$ are pairwise disjoint and their
  union is all of $\rf(\xi_0,\xi_1)$.

  \subsection{The Action of $GL(F(\xi_0))$}  It is clear that each $\rf(\xi_2,\dots ,\xi_n)$ is stable under the action of $GL(F(\xi_0))$ since
  isomorphic modules have isomorphic Tor groups.  More is true, however.

  \begin{lemma}\label{syz} Let $M$ be a finitely-generated $n$-graded $A_n$-module with minimal resolution
  $$0\to F_n\stackrel{d_n}{\to} F_{n-1}\stackrel{d_{n-1}}{\to}\cdots\to F_2\stackrel{d_2}{\to} F_1\stackrel{d_1}{\to} F_0\to M\to 0.$$ Let
  $\mu\in GL(F_0)$ and let $M'=\mu(M)$.  Then $M'$ has minimal resolution
  $$0\to F_n\stackrel{d_n}{\to} F_{n-1}\stackrel{d_{n-1}}{\to}\cdots\to F_2\stackrel{d_2}{\to} F_1\stackrel{d_1'}{\to} F_0\to M'\to 0,$$ where
  $d_1'=d_1\circ\mu$.
  \end{lemma}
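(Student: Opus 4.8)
The plan is to show that the given complex is exact and that it is minimal, so that it is a minimal free resolution of $M'$; then the claim about $d_1'$ records the one place where the resolution changes. First I would observe that $\mu\in GL(F_0)$ is an isomorphism, so applying it to the truncation $F_1\stackrel{d_1}{\to}F_0\to M\to 0$ gives a commutative diagram with exact rows in which the left square is $F_1\stackrel{d_1}{\to}F_0$ over $F_1\stackrel{\mu\circ d_1}{\to}F_0$ (with the identity on $F_1$ and $\mu$ on $F_0$) and the right vertical map is the induced isomorphism $M\to M'=\mu(M)$. Hence $\mathrm{im}(d_1'')=\mathrm{im}(\mu\circ d_1)=\ker(F_0\to M')$, so the sequence $F_1\stackrel{d_1'}{\to}F_0\to M'\to 0$ is exact with $d_1'=\mu\circ d_1$ (I would align the composition order with the statement, writing $d_1\circ\mu$ if $\mu$ is viewed as acting on the source; the point is that it is $d_1$ precomposed/postcomposed with the isomorphism $\mu$).

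Next I would argue that replacing $d_1$ by $d_1'=\mu\circ d_1$ does not change anything further out: since $\mu$ is an isomorphism of $F_0$, we have $\ker d_1'=\ker(\mu\circ d_1)=\ker d_1$, so the old maps $d_2,\dots,d_n$ still form an exact complex $0\to F_n\to\cdots\to F_2\stackrel{d_2}{\to}F_1$ whose image is $\ker d_1'$. Splicing this onto the exact three-term sequence above yields that
$$0\to F_n\stackrel{d_n}{\to}\cdots\to F_2\stackrel{d_2}{\to}F_1\stackrel{d_1'}{\to}F_0\to M'\to 0$$
is a free resolution of $M'$. It remains to check minimality. A graded free resolution is minimal exactly when each differential has entries in the maximal graded ideal $I_n=(x_1,\dots,x_n)$, equivalently when $d_i\otimes_{A_n}k=0$ for all $i\ge 1$. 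For $i\ge 2$ this is inherited verbatim from the minimality of the original resolution. For $i=1$, by the Proposition of Section \ref{filtration} (identifying $GL(F(\xi_0))$ with $GL_\lesssim(V(\xi_0))$) the automorphism $\mu$ reduces mod $I_n$ to an isomorphism $\bar\mu\in GL(V(\xi_0))$; hence $d_1'\otimes k=\bar\mu\circ(d_1\otimes k)=\bar\mu\circ 0=0$, so $d_1'$ also has all entries in $I_n$ and the new resolution is minimal.

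The main obstacle—really the only subtle point—is the bookkeeping around minimality and the grading: one must confirm that $\mu\in GL(F(\xi_0))$ is genuinely a \emph{graded} automorphism reducing to something invertible on $V(\xi_0)=F_0/I_nF_0$, which is precisely the content of the Proposition above (every such $\mu$ has upper-triangular block form with invertible scalar diagonal blocks), so no new input is needed. Everything else is a diagram chase with isomorphisms. I would also note explicitly that, since minimal free resolutions are unique up to isomorphism, the resolution exhibited is \emph{the} minimal resolution of $M'$, which is what makes the statement meaningful for the invariants $\xi_i$: the modules $F_2,\dots,F_n$ are literally unchanged, so $\mathrm{Tor}_i^{A_n}(M',k)=F_i\otimes_{A_n}k=\mathrm{Tor}_i^{A_n}(M,k)$ for $i\ge 2$, recovering the stability of the $\xi_i$ under the $GL(F(\xi_0))$-action.
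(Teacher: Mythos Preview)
Your argument is correct and supplies exactly the details one would expect: exactness follows because $\ker(\mu\circ d_1)=\ker d_1$ for an isomorphism $\mu$, and minimality follows from the standard criterion $d_i\otimes_{A_n}k=0$, which is preserved since $d_1'\otimes k=\bar\mu\circ(d_1\otimes k)=0$. The paper itself offers no argument at all---its proof reads ``This is an easy exercise and is left to the reader''---so there is nothing to compare against; your write-up is the natural way to discharge that exercise. One minor remark: for the minimality step at $d_1'$ you do not actually need that $\bar\mu$ is invertible, only that $d_1\otimes k=0$; the invocation of the Proposition identifying $GL(F(\xi_0))$ with $GL_\lesssim(V(\xi_0))$ is harmless but unnecessary there.
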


  \begin{proof}This is an easy exercise and is left to the reader.
  \end{proof}

  As a consequence, we see that each isomorphism class of modules has the same {\em syzygies}, not just the same types $\xi_2,\dots ,\xi_n$.  It
  is this fact we shall exploit now.

 \subsection{A Different Variety Structure}\label{different} Given a module $M$ with a minimal free resolution
 $$0\to F_n\stackrel{d_n}{\to} F_{n-1}\stackrel{d_{n-1}}{\to}\cdots\to F_2\stackrel{d_2}{\to} F_1\stackrel{d_1}{\to} F_0\to M\to 0,$$ we may
 think of the map $d_j:(F_j)_v\to (F_{j-1})_v$, $v\in\xi_j$, as giving us a subspace of dimension $\dim (F_j)_v$ inside $(F_{j-1})_v$.  Denote
 this subspace by $d_j(M)$.  Define a variety $Y_{\xi_2,\dots ,\xi_n}$ by
 $$Y_{\xi_2,\dots ,\xi_n} = \prod_{j=2}^n\prod_{(v,i)\in\xi_j}\text{Gr}_{\dim F(\xi_j)_v}(F(\xi_{j-1})_v).$$  We may now construct a map
 $\varphi:\rf(\xi_2,\dots ,\xi_n)\to Y_{\xi_2,\dots ,\xi_n}$ by
 $$\varphi(M) = (d_2(M),d_3(M),\dots ,d_n(M)).$$  As a consequence of Lemma \ref{syz}, we obtain the following result.

 \begin{theorem} The map $\varphi$ induces a map
 $$\overline{\varphi}: GL(F(\xi_0))\backslash\rf(\xi_2,\dots ,\xi_n)\to Y_{\xi_2,\dots ,\xi_n}.$$
 \hfill $\qed$
 \end{theorem}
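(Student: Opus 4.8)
The strategy is to verify that $\varphi$ is constant on each orbit of $GL(F(\xi_0))$ acting on $\rf(\xi_2,\dots,\xi_n)$; the induced map $\overline{\varphi}$ on the orbit set then exists by the universal property of the quotient, and nothing is asserted about compatibility with the variety structure on $Y_{\xi_2,\dots,\xi_n}$, so this is a purely set-theoretic claim. Accordingly I would fix a module $M\in\rf(\xi_2,\dots,\xi_n)$ together with the minimal free resolution $0\to F_n\to\cdots\to F_2\xrightarrow{d_2}F_1\xrightarrow{d_1}F_0\to M\to 0$ used to compute $\varphi(M)=(d_2(M),\dots,d_n(M))$, and I would fix $\mu\in GL(F(\xi_0))=GL(F_0)$.

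The only real input is Lemma \ref{syz}: it produces, for $M'=\mu(M)$, a minimal free resolution built from the very same free modules $F_0,\dots,F_n$ and the very same differentials $d_2,\dots,d_n$, with only $d_1$ altered. Since $\varphi$ records nothing but the subspaces $d_j\!\left((F_j)_v\right)\subseteq(F_{j-1})_v$ for $j\ge 2$ and $(v,i)\in\xi_j$, and these are literally unchanged, one gets $d_j(M')=d_j(M)$ as points of $\text{Gr}_{\dim F(\xi_j)_v}(F(\xi_{j-1})_v)$ for every $j\ge 2$, hence $\varphi(M')=\varphi(M)$. That is the whole argument, and $\overline{\varphi}$ is then forced.

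The single delicate point --- and the closest thing here to an obstacle --- is that the assignment $M\mapsto\varphi(M)$ presupposes a choice of minimal free resolution for each module, and minimal free resolutions are unique only up to a grading-respecting isomorphism of complexes, which need not restrict to the identity on the $v$-graded pieces in positive homological degree. I would dispose of this by fixing these resolutions once and for all in a $GL(F(\xi_0))$-equivariant way: pick a minimal resolution for each orbit representative and transport it along the orbit via Lemma \ref{syz}, so that the resolution of $\mu(M)$ entering the definition of $\varphi(\mu(M))$ is exactly the transported one. With that convention the orbit-constancy of $\varphi$ holds on the nose, and the theorem then records precisely the fact that, under the $GL(F(\xi_0))$-action, everything in a minimal resolution from the first syzygies onward --- the differentials $d_2,\dots,d_n$ --- is rigid, which is exactly the data extracted by $\varphi$. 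In short, all the substance is contained in Lemma \ref{syz} (flagged there as an easy exercise), and the present statement is its formal consequence.
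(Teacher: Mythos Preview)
Your proposal is correct and takes essentially the same approach as the paper: the paper's proof consists solely of the phrase ``As a consequence of Lemma~\ref{syz}'' followed by a $\qed$, and you have correctly identified that lemma as the entire substance and spelled out why it suffices. Your attention to the choice-of-resolution issue is a genuine refinement that the paper silently passes over.
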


 Via the map $\overline{\varphi}$ we may put a variety structure on each $GL(F(\xi_0))\backslash\rf(\xi_2,\dots ,\xi_n)$ as follows. For each orbit,
 choose a representative $M$.  The module $M$ determines a point on $\rf(\xi_0,\xi_1)$ and the orbit determines a point on $Y_{\xi_2,\dots
 ,\xi_n}$.  We therefore may assemble these to obtain a map
 $$\Phi:GL(F(\xi_0))\backslash\rf(\xi_0,\xi_1)\to \coprod_{\xi_2,\dots ,\xi_n} \rf(\xi_0,\xi_1)\times Y_{\xi_2,\dots ,\xi_n}$$ defined by
 $\Phi([M]) = (M,(d_2(M),d_3(M),\dots ,d_n(M)))$.  We note that when the various $\varphi$ are injective it is not necessary to include the
 variety $\rf(\xi_0,\xi_1)$ in the definition of the map $\Phi$.  This happens, for example, if all the generators in $\xi_0$ are co-located.
 However, in Section \ref{containment} below, we give an example to show that the maps $\varphi$ need not be injective if there are generators
 in $\xi_0$ in incomparable locations.  Adding in the variety $\rf(\xi_0,\xi_1)$ forces $\Phi$ to be injective.

 The map $\Phi$ allows us to put a variety structure on $GL(F(\xi_0))\backslash\rf(\xi_0,\xi_1)$, but it is not the same as the structure of the
 quotient space defined in Section \ref{var}.  We have lost some information in our construction.  This will be discussed further in the next
 section.

\section{Examples}\label{examples}
\subsection{An Example from \cite{cz}}\label{czex}  Consider $n=2$ and the modules with generators and relations $\xi_0=\{((0,0),2)\}$ and
$\xi_1=\{((0,3),1),((1,2),1),((2,1),1),((3,0),1)\}$, respectively.  Since there are two generators which are co-located, we have
$GL(F(\xi_0))=GL_2(k)$. The set $\rf(\xi_0,\xi_1)$ is obtained by choosing, for each generator $e_v\in\xi_1$, a relation between the two
generators of $F(\xi_0)$; that is, a line in $k^2=F(\xi_0)_v$.  Since the relations are not comparable in the order $\lesssim$, there are no
containment conditions in $\rf(\xi_0,\xi_1)$, and we obtain
$$\rf(\xi_0,\xi_1) = \zp^1_k\times\zp^1_k\times\zp^1_k\times\zp^1_k.$$ The action of $GL_2(k)$ on this is the usual diagonal action.

In \cite{cz}, the authors show that the orbit space $GL_2(k)\backslash\rf(\xi_0,\xi_1)$ contains a copy of $\zp^1_k-\{0,1,\infty\}$ and
therefore that it is impossible to obtain a complete family of discrete invariants parametrizing these modules (in contrast to the $n=1$ case,
where the barcode suffices).  Let us examine this example further from the point of view of Section \ref{different}.

Since $n=2$, the only higher $\xi_i$ we need to consider is $\xi_2$.  Since elements of $\xi_2$ effectively provide syzygies among the
generators in $\xi_1$, we see that if $e\in\xi_2$, we must have $\deg e \lesssim (3,3)$ since $(3,3)$ is the least upper bound in $\zn^2$ for
the set $\{(0,3),(1,2),(2,1),(3,0)\}$.  (This follows from the fact that $A_2=k[x,y]$ is an integral domain.)  Thus, there are only six possible
locations for generators in $\xi_2$: $(1,3),(2,2),(3,1),(2,3),(3,2),(3,3)$.

To enumerate the orbits of the $GL_2(k)$-action on $(\zp^1)^4$, note that since the $GL_2(k)$-action on $\zp^1$ is $3$-transitive, we have two
cases to consider:
\begin{enumerate}
\item $\Omega=\{(\ell_1,\ell_2,\ell_3,\ell_4)\in (\zp^1)^4 | \ell_i\ne \ell_j, i\ne j\}$; and
\item those $4$-tuples where at least two of the $\ell_i$ are the same.
\end{enumerate}
The second case admits further refinement as well.

Consider the set $\Omega$.  Since $GL_2(k)$ acts $3$-transitively on $\zp^1$, we see immediately that
 \begin{eqnarray*}
 GL_2(k)\backslash\Omega & = &  \{(0,\infty,1,\ell) | \ell\in\zp^1-\{0,1,\infty\}\} \\
           & \cong & \zp^1-\{0,1,\infty\}.
 \end{eqnarray*}
 Let $\alpha\in k-\{0,1\}$ correspond to the line $\ell$.  Then the relations of the associated isomorphism class of modules are
 $$[y^3,0], [0,xy^2], [-x^2y,x^2y], [-x^3,\alpha x^3];$$ that is, the first generator dies at $(0,3)$, the second at $(1,2)$, the generators
 become equal at $(2,1)$ and $\alpha$ times the second equals the first at $(3,0)$.  An easy calculation shows that there are two syzygies among
 these:
 \begin{center}
 $x^2[y^3,0] - xy[0,xy^2] + y^2[-x^2y,x^2y]$\\
 $(1-\alpha)[0,xy^2] - xy[-x^2y,x^2y] + y^2[-x^3,\alpha x^3]$
 \end{center}
 in degrees $(2,3)$ and $(3,2)$, respectively.  It follows that $\Omega\subset \rf(\{(2,3),(3,2)\}$.  The space $Y_{\xi_2}$ for
 $\xi_2=\{((2,3),1),((3,2),1)\}$ is $\text{Gr}_1(k^3)\times \text{Gr}_1(k^3) = \zp^2\times\zp^2$ and the map
 $\overline{\varphi}:GL_2(k)\backslash\Omega\to Y_{\xi_2}$ is given by
 $$\overline{\varphi}((0,\infty,1,\ell)) = ([1:-1:1],[1-\alpha:-1:1]),$$ as revealed by the syzygies above.  Note that the image of
 $\overline{\varphi}$ is a copy of $\zp^1-\{0,1,\infty\}$ inside $\zp^2\times\zp^2$, embedded in the second factor.

 There are other elements in $\rf(\xi_2)$ for $\xi_2=\{((2,3),1),((3,2),1)\}$ besides those in $\Omega$, and these fall in the category of those
 points in $(\zp^1)^4$ having fewer than $4$ distinct coordinates.  Since $GL_2(k)$ acts $3$-transitively on $\zp^1$, we see that dividing up
 points in $(\zp^1)^4$ into groups by numbers and locations of distinct points, that there are finitely many $GL_2(k)$ orbits in
 $(\zp^1)^4-\Omega$.  These are enumerated in Table \ref{ex1table}.  The orbit representative column indicates which elements of a $4$-tuple are
 the same.  For example, $(0,\infty,0,1)$ means that the first and third coordinates of any element in the orbit are equal.  The $\xi_2$ column
 shows the degrees of generators for the syzygies among the relations in $\xi_1$.  For each $\xi_2$, we have the variety $Y_{\xi_2}$ defined
 above, and finally $\text{im}\overline{\varphi}$ indicates the point on $Y_{\xi_2}$ mapped to by the particular orbit.  In this example, the
 various $\overline{\varphi}$ are injective, and so we may ignore the $\rf{\xi_0,\xi_1}$ portion.

 \begin{table}
 \begin{tabular}{c|c|c|c}
 orbit rep & $\xi_2$ & $Y_{\xi_2}$ & $\text{im}\overline{\varphi}$ \\ \hline\hline
 $(0,1,\infty,\alpha)$ & $(2,3),(3,2)$ & $\zp^2\times\zp^2$ & $([1:-1:1],[1-\alpha:-1:1])$ \\
 $(0,0,\infty,1)$ & $(1,3),(3,2)$ & $\zp^1\times\zp^2$ & $([1:-1],[1:-1:1])$ \\
 $(0,\infty,0,1)$ & $(2,3),(3,2)$ & $\zp^2\times\zp^2$ & $([1:0:-1],[1:-1:-1])$ \\
 $(0,\infty,1,0)$ & $(2,3),(3,3)$ & $\zp^2\times\zp^3$ & $([1:-1:1],[1:0:0:-1])$ \\
 $(0,\infty,\infty,1)$ & $(2,2),(3,3)$ & $\zp^1\times \zp^3$ & $([1:-1],[1:0:-1:1])$ \\
 $(0,\infty,1,\infty)$ & $(2,3),(3,2)$ & $\zp^2\times\zp^2$ & $([1:-1:1],[1:0:-1])$ \\
 $(0,\infty,1,1)$ & $(2,3),(3,1)$ & $\zp^2\times\zp^1$ & $([1:1:-1],[1:-1])$ \\
 $(0,0,\infty,\infty)$ & $(1,3),(3,1)$ & $\zp^1\times\zp^1$ & $([1:-1],[1:-1])$ \\
 $(0,\infty,0,\infty)$ & $(2,3),(3,2)$ & $\zp^2\times\zp^2$ & $([1:0:-1],[1:0-1])$ \\
 $(0,\infty,\infty,0)$ & $(2,2),(3,3)$ & $\zp^1\times\zp^3$ & $([1:-1],[1:0:0:-1])$ \\
 $(0,0,0,\infty)$ & $(1,3),(2,2)$ & $\zp^1\times\zp^1$ & $([1:-1],[1:-1])$ \\
 $(0,0,\infty,0)$ & $(1,3),(3,2)$ & $\zp^1\times\zp^2$ & $([1:-1],[1:0:-1])$ \\
 $(0,\infty,0,0)$ & $(2,3),(3,1)$ & $\zp^2\times\zp^1$ & $([1:0:-1],[1:-1])$ \\
 $(\infty,0,0,0)$ & $(2,2),(3,1)$ & $\zp^1\times\zp^1$ & $([1:-1],[1:-1])$ \\
 $(0,0,0,0)$ & $(1,3),(2,2),(3,1)$ & $\zp^1\times\zp^1\times\zp^1$ & $([1:-1],[1:-1],[1:-1])$
 \end{tabular}
 \caption{\label{ex1table} The $GL_2(k)$ orbits}
 \end{table}

 Observe that there are $9$ distinct $\xi_2$'s and that via the map
 $$\Phi:GL_2(k)\backslash\rf(\xi_0,\xi_1)\to \coprod_{\xi_2} Y_{\xi_2}$$ we may put a variety structure on the quotient set
 $GL_2(k)\backslash\rf(\xi_0,\xi_1)$.  Of course, this is not the quotient of $\rf(\xi_0,\xi_1)$ in the category of varieties and we have lost a
 great deal of information in the process.  For example, consider $\xi_2=\{(2,3),(3,2)\}$.  This includes the generic set $\Omega$ along with
 the hyperplanes $H_{13}=\{(\ell_1,\ell_2,\ell_1,\ell_4)\}$, $H_{24}=\{(\ell_1,\ell_2,\ell_3,\ell_2)\}$, and their intersection $H_{13}\cap
 H_{24}$.  The map $\Phi$, restricted to $GL_2(k)\backslash\rf(\xi_2)$ has image
 $$\{([1:-1:1],[1-\alpha:-1:1])|\alpha\ne 0,1\} \cup {} $$ $$\{([1:0:-1],[1:-1:-1]),([1:-1:1],[1:0:-1]),([1:0:-1],[1:0:-1])\}.$$
 Note that the three points are separated from $GL_2(k)\backslash\Omega=\zp^1-\{0,1,\infty\}$.  However, if one uses the variety structure on
 $\rf(\xi_0,\xi_1)$ and considers the induced quotient topology on $GL_2(k)\backslash\rf(\xi_0,\xi_1)$, the three points are in the closure of
 $GL_2(k)\backslash\Omega$.  Thus, with our approach, information about degeneracies is lost at the expense of obtaining a description of the
 collection of isomorphism classes of modules as a variety.

 \subsection{An Example with Containment Conditions}\label{containment}
 Consider the following multisets:
 \begin{eqnarray*}
 \xi_0 & = & \{((0,1),1),((1,0),2)\} \\
 \xi_1 & = & \{((2,0),1),((1,1),1),((1,2),1)\}.
 \end{eqnarray*}
 Then we have
 $$\rf(\xi_0,\xi_1) = \{(\ell_1,\ell_2,p)\in \zp^1_{(2,0)}\times\zp^2_{(1,1)}\times\text{Gr}_2(k^3)_{(1,2)}|\ell_2\subset p\}$$ (here, the
 subscript on each factor indicates the degree of the relation).  The group $GL(F(\xi_0))$ is isomorphic to $GL_1(k)\times GL_2(k)$ and if we
 order the generators by letting the first lie at $(0,1)$, then $GL(F(\xi_0))$ has block form
 $$\left(\begin{array}{cc}
      GL_1(k) & 0 \\
        0 & GL_2(k)
        \end{array}\right).$$

 The orbits of the action on $\rf(\xi_0,\xi_1)$ are shown in Table \ref{table2}. An element of $\text{Gr}_2(k^3)$ is denoted $\{ab\}$,
 where $a$ and $b$ are coordinate directions or sums of such (e.g., $x$ or $x+z$). The final orbit in the table is isomorphic to $\zp^1-\{0,\infty\}$.

  Notice that the map $\overline{\varphi}$ is not injective.
 This happens because there are nonisomorphic modules with the same higher syzygies.  This is a consequence of having generators in different,
 incomparable locations.  A typical such occurrence is when the same relation is imposed among the generators in degree $(1,0)$ at locations
 $(2,0)$ and $(1,1)$, yielding a syzygy at $(2,1)$.  We may then choose several inequivalent relations at $(1,2)$ that yield no higher syzygies.
 That is why we need to use the variety $\rf{\xi_0,\xi_1}$ to distinguish orbits in this case.

 \begin{table}
 \begin{tabular}{c|c|c|c|c}
 orbit rep $\omega$ & $\xi_2$ & $Y_{\xi_2}$ & $\text{im}\overline{\varphi}$ & $\text{im}\Phi$ \\ \hline\hline
 $([1:0],[0:1:0],\{yz\})$ & $(2,1)$ & $\zp^1$ & $[1:-1]$ & $(\omega,[1:-1])$ \\
 $([1:0],[0:1:0],\{xy\})$ & $(2,1)$ & $\zp^1$ & $[1:-1]$ & $(\omega,[1:-1])$ \\
 $([1:0],[0:1:0],\{x+z,y\})$ & $(2,1)$ & $\zp^1$ & $[1:-1]$ & $(\omega,[1:-1])$ \\
 $([1:0],[0:0:1],\{xz\})$ & $\emptyset$ & $\ast$ & $\ast$ & $(\omega,\ast)$ \\
 $([1:0],[0:0:1],\{x+y,z\})$ & $\emptyset$ & $\ast$ & $\ast$ & $(\omega,\ast)$ \\
 $([1:0],[0:0:1],\{yz\})$ & $(2,2)$ & $\zp^2$ & $[1:0:-1]$ & $(\omega,[1:0:-1])$ \\
 $([1:0],[1:0:0],\{xy\})$ & $(2,2)$ & $\zp^2$ & $[1:0:-1]$ & $(\omega,[1:0:-1])$ \\
 $([1:0],[1:0:0],\{xz\})$ & $\emptyset$ & $\ast$ & $\ast$ & $(\omega,\ast)$ \\
 $([1:0],[1:1:0],\{xy\})$ & $(2,2)$ & $\zp^2$ & $[1:1:-1]$ & $(\omega,[1:1:-1])$ \\
 $([1:0],[1:1:0],\{x+y,x+z\})$ & $\emptyset$ & $\ast$ & $\ast$ & $(\omega,\ast)$ \\
 $([1:0],[1:0:1],\{xz\})$ & $\emptyset$ & $\ast$ & $\ast$ & $(\omega,\ast)$ \\
 $([1:0],[1:0:1],\{x+z,y\})$ & $(2,2)$ & $\zp^2$ & $[1:0:-1]$ & $(\omega,[1:0:-1])$ \\
 $([1:0],[1:0:1],\{x+z,y+tz\}),t\ne 0$ & $\emptyset$ & $\ast$ & $\ast$ & $(\omega,\ast)$
 \end{tabular}
 \caption{\label{table2} Orbits of the $GL(F(\xi_0))$ action}
 \end{table}

 \section{Geometric Interpretation of $\xi_i$, $i\ge 2$}\label{geometric}
 Consider the filtration of the circle shown in Figure \ref{circlefilt}.  The $H_0$ and $H_1$ modules for this filtration are shown in Figure
 \ref{h0h1}.  For $H_0$, we have $\xi_0=\{((0,0),3)\}$, $\xi_1=\{((0,1),1),((1,0),1),((2,1),1)\}$, and $\xi_2=\{((2,1),1)\}$, while $H_1$ is
 free with $\xi_0=\{((2,1),1)\}$.  Note the connection between $\xi_2$ for $H_0$ and $\xi_0$ for $H_1$.  This occurs in almost every example one
 writes down.  In this section, we provide an explanation for this.

 \begin{figure}
 \centerline{\includegraphics[height=2in]{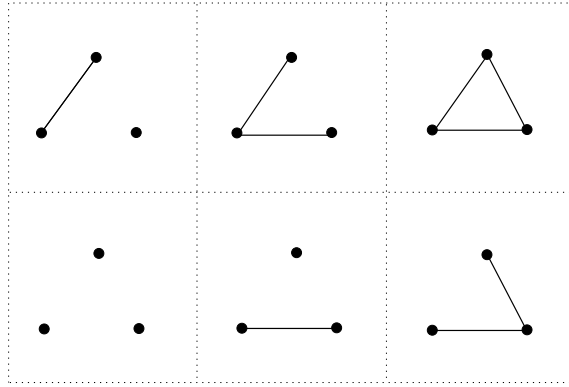}}
 \caption{\label{circlefilt} A filtration of the circle}
 \end{figure}

 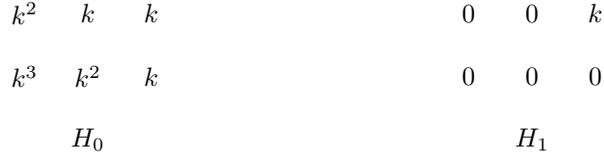
\begin{figure}
 $$\xymatrix{
 k^2  &  k  &  k  &  &  &  &  & 0  & 0 & k \\
 k^3  & k^2 & k   &  &  &  &  & 0 & 0 & 0 \\ \hline
     & H_0 &     &  &  &  &  &   & H_1 & }$$
 \caption{\label{h0h1}The modules $H_0$ and $H_1$}
 \end{figure}

 \subsection{Chain Complexes of $n$-graded Modules}
 Suppose that $X$ is a finite simplicial complex filtered by $\zn^n$; denote the filtered space by $X_\bullet$.  For each $i\ge 0$, we have the
 $n$-graded $A_n$-module $C_i(X_\bullet) = \bop_{v\in\zn^n} C_i(X_v)$ of $i$-chains.  Since the boundary map is functorial with respect to maps
 of simplicial complexes, we obtain a chain complex in the category of $n$-graded $A_n$-modules:
 $$C_\bullet(X_\bullet) = \{\cdots\to C_i(X_\bullet)\stackrel{\partial}{\to} C_{i-1}(X_\bullet)\to\cdots \}.$$  The $i$-th homology of this
 complex is the $n$-graded module $H_i(X_\bullet) = \bop_{v\in\zn^n} H_i(X_v)$.

 In previous sections, we studied these homology modules individually.  This point of view, however, allows us to draw connections between them
 as hinted at by the example above.  For this we need the hypertor groups of the complex $C_\bullet(X_\bullet)$.  A reference for this material
 is Section 5.7 of \cite{weibel}.

 The hypertor modules of $C_\bullet(X_\bullet)$ are the Tor groups in the category of $n$-graded $A_n$-modules:
 $$\tor_p^{A_n}(C_\bullet(X_\bullet),M)$$ where $M$ is any $n$-graded $A_n$-module.  Here, we are interested only in $M=k$, sitting in degree
 $(0,\dots ,0)\in\zn^n$.  These are obtained by taking a Cartan--Eilenberg resolution $P_{\bullet\bullet}\to C_\bullet(X_\bullet)$
 (see p.~145 of \cite{weibel}) or $K_{\bullet\bullet}\to k$, and taking the homology of the total complex of the resulting double complex
 obtained by tensoring two objects:
 $$P_{\bullet\bullet}\otimes_{A_n} k \quad \text{or}\quad C_\bullet(X_\bullet)\otimes_{A_n} K_{\bullet\bullet} \quad \text{or}\quad
 P_{\bullet\bullet}\otimes_{A_n} K_{\bullet\bullet}.$$ As usual, there are two spectral sequences for computing the homology of the total
 complex.  One of these has (by taking horizontal homology first)
 $$E_{pq}^2 = \text{Tor}_p^{A_n}(H_q(X_\bullet),k) \Rightarrow \tor_{p+q}^{A_n}(C_\bullet(X_\bullet),k).$$

 We shall discuss the abutment in the next section.  For now, let us focus on the $E^2$-term itself.  Set $p=2$ and note that we have a map
 $$d^2_{2q}:\text{Tor}_2^{A_n}(H_q(X_\bullet),k) \to \text{Tor}_0^{A_n}(H_{q+1}(X_\bullet),k).$$  In other words, we have a functorial way to
 relate elements of $\xi_2(H_q(X_\bullet))$ to elements of $\xi_0(H_{q+1}(X_\bullet))$.

 In the circle example above, we have a map of graded modules
 $$d^2_{20}:\text{Tor}_2^{A_2}(H_0(X_\bullet),k)\to \text{Tor}_0^{A_2}(H_1(X_\bullet),k).$$  As $H_1(X_\bullet)$ is a free $A_2$-module with
 generator at $(2,1)$, we have the resolution
 $$0\to A_2(2,1) \stackrel{\cong}{\to} H_1(X_\bullet)\to 0$$ and so $\text{Tor}_i^{A_2}(H_1(X_\bullet),k)$ is the $i$-th homology of the complex
 $$0\to A_2(2,1)\otimes_{A_2} k \to 0.$$ Thus, we get only $\text{Tor}_0$ and it is a single copy of $k$ in degree $(2,1)$.

 To compute the Tor groups of $H_0(X_\bullet)$, we use the resolution
 $$0\to A_2(2,1)\stackrel{d_2}{\to} A_2(1,0)\oplus A_2(0,1)\oplus A_2(2,0) \stackrel{d_1}{\to} A_2(0,0)^3 \to H_0(X_\bullet)\to 0,$$
 where the maps $d_i$ are given by
 $$d_2 = \left(\begin{array}{c}
           -xy \\ x^2 \\ -y
           \end{array}\right) \qquad d_1 = \left(\begin{array}{ccc}
                                              x & y & 0 \\
                                              -x & 0 & x^2 \\
                                              0 & -y & -x^2
                                              \end{array}\right).$$
 Applying $-\otimes_{A_2} k$ we obtain the complex
 $$0\to k(2,1)\stackrel{0}{\to} k(1,0)\oplus k(0,1)\oplus k(2,0) \stackrel{0}{\to} k(0,0)^3\to 0,$$ from which we deduce that
 $\text{Tor}_2^{A_2}(H_0(X_\bullet),k)=k(2,1)$.  The map $d^2_{20}$ is then a map $k(2,1)\to k(2,1)$.

 The easy way to see that this map is an isomorphism is to note that since $C_0(X_\bullet)$ and $C_1(X_\bullet)$ are free $A_2$-modules, we have
 $\tor_i^{A_2}(C_\bullet(X_\bullet),k) = H_i(C_\bullet(X_\bullet)\otimes_{A_2} k)$.  It is easy to see that
 \begin{eqnarray*}
 \tor_0^{A_2}(C_\bullet(X_\bullet),k) & = & k(0,0)^3  \\
 \tor_1^{A_2}(C_\bullet(X_\bullet),k) & = & k(0,1) \oplus k(1,0) \oplus k(2,0),
 \end{eqnarray*}
 and these modules occur in the $E^2$-term of the spectral sequence in degrees $(0,0)$ and $(1,0)$, respectively.  Since $E^3=E^\infty$ in this
 case, we must have that $d^2_{20}:k(2,1)\to k(2,1)$ is an isomorphism.

 In fact, this map is $-\text{id}$, as can be seen by choosing a Cartan--Eilenberg resolution of the complex $C_\bullet(X_\bullet)$.  Upon
 applying the functor $-\otimes_{A_2} k$ to this resolution, we find that the $E^0$-term of the (transposed) spectral sequence is
 $$\xymatrix{
 {\begin{array}{c}
 k(1,0)\oplus k(0,1) \\
 \oplus \\
 k(2,0)\oplus k(2,1)
 \end{array}}  & k(2,1) & 0 \\
 {\begin{array}{c}
 k(0,0)^3\oplus k(1,0) \\
 \oplus \\
 k(0,1)\oplus k(2,0)
 \end{array}}  & {\begin{array}{c}
                   k(1,0)\oplus k(0,1) \\
                   \oplus \\
                   k(2,0)\oplus k(2,1)
                   \end{array}} & k(2,1) \\
  }$$ where
 the horizontal and vertical maps are either $0$ or $\pm\text{id}$ as allowed by grading (horizontal maps are $\text{id}$,
 vertical maps $-\text{id}$ in column
 $1$ and $\text{id}$ in column $0$).  It follows that the $E^1$-term is
 $$\xymatrix{
 k(2,1) & 0 & 0 \\
 k(0,0)^3 & k(1,0)\oplus k(0,1)\oplus k(2,0) & k(2,1)}$$ and hence $E^1=E^2$.  Given the description of the $E^0$-term, it is now clear,
 by construction, that $d^2_{20}:E^2_{20}\to E^2_{01}$ is $-\text{id}$.

 Geometrically, this may be interpreted as follows.  The generator of $\text{Tor}_2^{A_2}(H_0(X_\bullet),k)$ represents the first location where
 a collection of relations in $H_0$ is not an independent set.  This can occur due to duplications of relations, or, as in this example, because
 these have come together to form a $1$-cycle.  This is a general result.

 \begin{theorem}\label{tor2tor0} The kernel of the map
 $$d^2_{2q}:\text{\em Tor}_2^{A_n}(H_q(X_\bullet),k)\to \text{\em Tor}_0^{A_n}(H_{q+1}(X_\bullet),k)$$ is generated by syzygies resulting from the
 same relation being imposed in $H_q(X_\bullet)$ in multiple degrees.  If a nonzero $w\in\text{\em Tor}_0^{A_n}(H_{q+1}(X_\bullet),k)$ is in the
 image of $d^2_{2q}$, say $d^2_{2q}z=w$, then $w=\sum\alpha_i w_i$ for some $(q+1)$-simplices $w_i$ where each $w_i$ corresponds to an element
 of $\text{\em Tor}_1^{A_n}(B_q(X_\bullet),k)$ and $z$ gives a syzygy among the $w_i$.
 \end{theorem}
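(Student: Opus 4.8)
The plan is to make the differential $d^2_{2q}$ completely explicit as a composite of connecting homomorphisms and then read its kernel and image directly off the long exact sequences governing the cycle--boundary filtration of $C_\bullet(X_\bullet)$. Write $Z_j=Z_j(X_\bullet)$, $B_j=B_j(X_\bullet)$ for the $n$-graded modules of cycles and boundaries and record the short exact sequences
\[
0\to Z_{q+1}\to C_{q+1}\stackrel{\partial}{\to} B_q\to 0,\quad
0\to B_q\to Z_q\to H_q\to 0,\quad
0\to B_{q+1}\to Z_{q+1}\to H_{q+1}\to 0 .
\]
As in the rest of this section I work under the hypothesis that the $C_j(X_\bullet)$ are free (e.g.\ the filtration is one-critical), so $\text{Tor}^{A_n}_{>0}(C_{q+1},k)=0$. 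Tensoring with $k$ then yields a connecting map $\delta\colon\text{Tor}_2^{A_n}(H_q,k)\to\text{Tor}_1^{A_n}(B_q,k)$, an injection $\iota\colon\text{Tor}_1^{A_n}(B_q,k)\hookrightarrow\text{Tor}_0^{A_n}(Z_{q+1},k)$, and a surjection $\pi\colon\text{Tor}_0^{A_n}(Z_{q+1},k)\twoheadrightarrow\text{Tor}_0^{A_n}(H_{q+1},k)$. The first step is to verify that, up to sign, $d^2_{2q}=\pi\circ\iota\circ\delta$: this is the standard description of the $d^2$ of a hyper-Tor spectral sequence, which one checks either by inspecting the Cartan--Eilenberg double complex $P_{\bullet\bullet}\otimes_{A_n}k$ (the same kind of computation done by hand for the circle above) or by naturality of the spectral sequence under the brutal truncations $\tau_{\le q+1}C_\bullet(X_\bullet)$ and $\tau_{\le q}C_\bullet(X_\bullet)$.

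For the kernel, since $\iota$ is injective, $\ker d^2_{2q}=\delta^{-1}(\ker(\pi\circ\iota))$. From the long exact sequence of $0\to B_q\to Z_q\to H_q\to 0$ one has $\ker\delta=\operatorname{im}(\text{Tor}_2^{A_n}(Z_q,k)\to\text{Tor}_2^{A_n}(H_q,k))$, so this subspace already lies in $\ker d^2_{2q}$; the remaining contribution is $\delta^{-1}$ of those generating $(q+1)$-cycles whose class in $H_{q+1}$ is killed by $\pi$, i.e.\ that bound in dimension $q+2$ or become multiples of lower-degree classes. The real work is to translate this into the language of the statement: I would build a minimal free resolution of $H_q$ by lifting a minimal free resolution of $Z_q$ along $Z_q\twoheadrightarrow H_q$ together with a choice of bounding chains for $B_q$, and then identify which generators of $F_2$ are pulled back from the resolution of $Z_q$. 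These turn out to be exactly the syzygies forced when a single relation of $H_q$ --- a fixed $k$-linear combination of generators that becomes a boundary --- is imposed at two incomparable degrees $v,v'$, namely $x^{(v\vee v')-v}\rho_v-x^{(v\vee v')-v'}\rho_{v'}$ at $v\vee v'$, whose image under $\delta$ lands in $\ker\pi$; the ``witness bounds in dimension $q+2$'' case is absorbed into the same description, since such a syzygy is again an instance of an a priori (hence repeated) relation.

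For the image, suppose $0\ne w=d^2_{2q}z$. By the first step $w=\pi(\iota(\delta z))$ with $\delta z\in\text{Tor}_1^{A_n}(B_q,k)$, and from $0\to Z_{q+1}\to C_{q+1}\to B_q\to 0$ one identifies $\text{Tor}_1^{A_n}(B_q,k)=\ker(\text{Tor}_0^{A_n}(Z_{q+1},k)\to\text{Tor}_0^{A_n}(C_{q+1},k))$. Hence $\iota(\delta z)$ is represented by a minimal generator of $Z_{q+1}$, i.e.\ by a $(q+1)$-cycle $z'=\sum_i\alpha_i w_i$ with the $w_i$ the $(q+1)$-simplices occurring in it, and $\pi(z')=w$ exhibits $w$ as the class of this cycle; each $w_i$ contributes the generator $\partial w_i$ of $B_q$, and $z'$ (equivalently $z$) records the relation $\sum_i\alpha_i\partial w_i=\partial z'=0$ among them, realizing the promised class in $\text{Tor}_1^{A_n}(B_q,k)$. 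Chasing the same sequences also gives $\operatorname{im}d^2_{2q}=\pi(\iota(\operatorname{im}\delta))$ with $\operatorname{im}\delta=\ker(\text{Tor}_1^{A_n}(B_q,k)\to\text{Tor}_1^{A_n}(Z_q,k))$, pinning down which $w$ occur. Everything outside the translation step for the kernel is formal, so I expect that translation --- matching $\delta^{-1}(\ker\pi\iota)$ with ``the same relation imposed in several incomparable degrees'' --- to be the main obstacle.
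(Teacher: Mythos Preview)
Your approach is essentially the same as the paper's: both factor $d^2_{2q}$ as the composite
\[
\text{Tor}_2^{A_n}(H_q,k)\xrightarrow{\ \delta\ }\text{Tor}_1^{A_n}(B_q,k)\xrightarrow{\ \iota\ }\text{Tor}_0^{A_n}(Z_{q+1},k)\xrightarrow{\ \pi\ }\text{Tor}_0^{A_n}(H_{q+1},k)
\]
coming from the two short exact sequences you wrote down (the paper calls $\delta$ by $f$ and the composite $\pi\circ\iota$ by $g$), and then read off kernel and image by chasing these long exact sequences. Your write-up is in fact somewhat more careful than the paper's: you make explicit the freeness hypothesis on $C_{q+1}(X_\bullet)$ that forces $\iota$ to be injective, and you correctly flag the translation of $\delta^{-1}(\ker\pi\iota)$ into the language of ``the same relation imposed in multiple degrees'' as the non-formal step---the paper's own argument at that point is brief and heuristic, splitting into the cases $f(z)=0$ and $f(z)\ne 0$ and asserting the conclusion in each.
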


 \begin{proof} If one constructs a Cartan--Eilenberg resolution of $C_\bullet(X_\bullet)$ as in \cite{weibel}, p.~146, one discovers that the
 differential $d^2_{2q}$ is built as follows.  For each $q$, we have  exact sequences of $A_n$-modules
 $$0\to B_q(X_\bullet) \to Z_q(X_\bullet)\to H_q(X_\bullet)\to 0,$$
 and $$0\to Z_{q+1}(X_\bullet)\to C_{q+1}(X_\bullet)\to B_q(X_\bullet)\to 0,$$ together with the associated long exact sequence of Tor groups.
 We may piece these together as follows:
 $$\text{Tor}_2^{A_n}(H_q,k)\stackrel{f}{\to}\text{Tor}_1^{A_n}(B_q,k)\to\text{Tor}_0^{A_n}(Z_{q+1},k)\to
 \text{Tor}_0^{A_n}(H_{q+1},k).$$ Denote the composite of the last two maps by $g$.  Then $d^2_{2q}=g\circ f$.

 Denote the minimal resolutions of $H_q(X_\bullet)$ and $B_q(X_\bullet)$ by $P_{q\bullet}\to H_q$ and $Q_{q\bullet}\to B_q$. The differentials in these
 resolutions will be denoted $d_P$ and $d_Q$, respectively.  Recall that $I$
 denotes the ideal $(x_1,\dots ,x_n)\subset A_n$.
 Suppose $d^2_{2q}z=0$, $z\ne 0$.  We have two cases.

 \begin{enumerate}
 \item[1.] $f(z)=0$.  Then we have $f(z)\in I\cdot Q_{q1}$; that is, $f(z)=\sum x^vz_v$, where $z_v\in \xi(Q_{q1})$.  It is easy to see
 that $d_Q\circ f=0$ and so $d_Q(\sum x^vz_v) = 0$; that is, $\sum x^v d_Qz_v = 0$.  This element is trivial in $\text{Tor}_1^{A_n}(B_q,k)$.  It
 therefore gives an inessential relation among the generators of $B_q$.  Let $u_v=d_qz_v$.  Then $u_v\in Q_{q0}$ and $\sum x^vu_v=0$.  The $u_v$
 correspond to $(q+1)$-chains and the relation $\sum x^vu_v=0$ shows that the  $u_v$ are a redundant set of relations; i.e., we have imposed the
 same relations $(u_v)$ among $q$-chains in different grades.  Thus, the syzygy $z$ arises from imposing these redundant relations.

 \item[2.] $f(z)\ne 0$.  Then $f(z)=\sum z_v + \sum x^v w_v$ for some $z_v,w_v\in \xi(Q_{q1})$, and we have $g(f(z))=\sum x^v u_v$ for some
 $u_v\in P_{q+1,0}$.  Again, this element is trivial, this time in $\text{Tor}_0^{A_n}(H_{q+1}(X_\bullet),k)$, and so it must be that we have
 imposed the same relations $(u_v)$ among $q$-simplices in different grades.
 \end{enumerate}

 Finally, if $d^2_{2q}(z)=w\ne 0$, then $w=\sum u_v + \sum x^v w_v$ for some $u_v,w_v\in\xi(P_{q+1,0})$.  Write $f(z)=\sum y_v + \sum x^v z_v$,
 $y_z,w_z\in\xi(Q_{q1})$.  Note that $\sum y_v$ represents a nontrivial element of $\text{Tor}_1^{A_n}(B_q(X_\bullet),k)$, and the
 $(q+1)$-dimensional homology class $w$ is built from $q$-boundaries $y_v$ with $z$ giving a syzygy among them.
 \end{proof}

 In the case $n=2$, this gives the complete picture since $E^3=E^\infty$.  For $n\ge 3$, however, we have higher differentials in the spectral
 sequence.  In particular, the differential
 $d^\ell_{\ell q}:E^\ell_{\ell q}\to E^\ell_{0,q+\ell-1}$ is a map from a subquotient of $\text{Tor}_{\ell}^{A_n}(H_q(X_\bullet),k)$ to a
 quotient of $\text{Tor}_0^{A_n}(H_{q+\ell-1}(X_\bullet),k)$.  This is a much more subtle and complicated relationship that we shall not
 investigate here.

 \subsection{The Abutment}\label{abut}
 In this section we investigate the groups $\tor_i^{A_n}(C_\bullet(X_\bullet),k)$ that form the abutment of the spectral sequence investigated
 above.  Recall the filtration of the circle shown in Figure \ref{circlefilt} above.  We had
 \begin{eqnarray*}
 \tor_0^{A_n}(C_\bullet(X_\bullet),k) & = & k(0,0)^3 \\
 \tor_1^{A_n}(C_\bullet(X_\bullet),k) & = & k(0,1)\oplus k(1,0)\oplus k(2,0).
 \end{eqnarray*}
 Note that each of these groups is generated by elements that correspond to actual simplices in the space $X$; the three points for $\tor_0$ and
 the three edges for $\tor_1$.  As it stands, the grading prevents the existence of a nontrivial map between these groups, but if we drop the
 grading, there is an obvious map $\partial:\tor_1\to\tor_0$ given by the geometric boundary.  The homology of this complex is then the homology
 of the underlying space.

 This example is particularly nice since the chain groups $C_i(X_\bullet)$ are free $A_2$-modules.  Still, we shall now show that in certain cases
 it is possible
 to construct a boundary map $\partial_i:\tor_i(C_\bullet(X_\bullet),k)\to\tor_{i-1}(C_\bullet(X_\bullet),k)$, after forgetting the grading, so
 that the homology of the resulting complex is the homology of the space $X$.

 There is another spectral sequence converging to the modules $\tor_i(C_\bullet(X_\bullet),k)$; its $E^1$-term satisfies
 $$E^1_{pq} = \text{Tor}_q^{A_n}(C_p(X_\bullet),k)$$ and the $q$-th row of the $E^2$-term is obtained by taking the homology of the complex
 $E^1_{\bullet q}=\{\text{Tor}_q(C_\bullet(X_\bullet),k),d^1\}$, where $d^1$ is induced by the boundary map in $C_\bullet(X_\bullet)$.  The
 bottom row, $E^1_{\bullet 0}$ is simply the complex $C_\bullet(X_\bullet)\otimes_{A_n} k$.  Each $C_i(X_\bullet)\otimes_{A_n} k$ is an
 $n$-graded $k$-vector space with basis the $i$-simplices in $X$, but possibly with duplications if a particular cell enters the filtration in
 different grades.

 When $n=1$, there can be no such duplications; it therefore follows that if we ignore the grading and use the geometric boundary map, the
 bottom row is just $C_\bullet(X)\otimes k$.  Note also that when $n=1$, the modules $C_i(X_\bullet)$ are necessarily free (because of the lack
 of duplications) and so $\text{Tor}_1^{k[x]}(C_i(X_\bullet),k)=0$ for all $i\ge 0$.  It follows that
 $\tor_i(C_\bullet(X_\bullet),k)=C_i(X_\bullet)\otimes_{k[x]} k$, and with the boundary map $\partial_i:\tor_i\to\tor_{i-1}$, we see that
 $$H_i(\tor_\bullet^{k[x]}(C_\bullet(X_\bullet),k),\partial_\bullet) \cong H_i(X;k).$$

 For $n\ge 2$, things are more complicated.  Note that the $d^1$-map is mostly zero for degree reasons.  For example, on the bottom row, the
 only way an $i$-simplex $\sigma$ might map to something nonzero is if $\sigma$ and some part of $\partial\sigma$ entered the filtration in the
 same degree.  Otherwise, there is nowhere for $\sigma$ to go.  In the result below, we shall get around this by assuming that at most one
 simplex is added at a time as we move in any direction in the filtration.

 We first note the following fact.

 \begin{proposition} For all $j\ge n+\dim X$, we have $\tor_j^{A_n}(C_\bullet(X_\bullet),k) = 0$.
 \end{proposition}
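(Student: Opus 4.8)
The plan is to read the vanishing off the second spectral sequence, the one with
$$E^1_{pq}=\text{Tor}_q^{A_n}(C_p(X_\bullet),k)\Longrightarrow\tor_{p+q}^{A_n}(C_\bullet(X_\bullet),k),$$
by bounding the region in which its $E^1$-page can be nonzero. Put $d=\dim X$. Since $C_p(X_\bullet)=0$ unless $0\le p\le d$, only those columns can contribute; and since $A_n$ has global dimension $n$ (Hilbert's Syzygy Theorem), $\text{Tor}_q^{A_n}(-,k)=0$ for $q>n$ on finitely generated modules, so only the rows $0\le q\le n$ can contribute. If moreover $\text{Tor}_n^{A_n}(C_p(X_\bullet),k)=0$ for every $p$, then $E^1_{pq}$, hence $E^\infty_{pq}$, vanishes outside $0\le p\le d$, $0\le q\le n-1$; as $\tor_j^{A_n}(C_\bullet(X_\bullet),k)$ admits a finite filtration with associated graded $\bigoplus_{p+q=j}E^\infty_{pq}$, it vanishes whenever $j>d+(n-1)$, i.e.\ for all $j\ge n+\dim X$. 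Thus the proposition reduces to the single claim that $\text{Tor}_n^{A_n}(C_p(X_\bullet),k)=0$ for each $p$.

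To prove that claim, first decompose $C_p(X_\bullet)$, as an $n$-graded $A_n$-module, as $\bigoplus_\sigma M_\sigma$, the sum running over the finitely many $p$-simplices $\sigma$ of $X$, where $M_\sigma$ is built from the set $U_\sigma=\{v\in\zn^n:\sigma\in X_v\}$ by placing a copy of $k$ in each degree $v\in U_\sigma$ (and $0$ elsewhere) with each $x_i$ acting as the identity wherever source and target are nonzero. The set $U_\sigma$ is upward closed for $\lesssim$ by the compatibility axiom for a multi-filtration, and is nonempty since $\sigma\in X=X_w$; by Dickson's lemma it has finitely many minimal elements, so $C_p(X_\bullet)$ is finitely generated, as required above. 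Since $\text{Tor}$ commutes with this finite direct sum it is enough to show $\text{Tor}_n^{A_n}(M_\sigma,k)=0$.

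For this I would compute with the Koszul resolution $K_\bullet=K_\bullet(x_1,\dots,x_n)$ of $k=A_n/I$, where $I=(x_1,\dots,x_n)$. Since $K_{n+1}=0$, the group $\text{Tor}_n^{A_n}(M_\sigma,k)$ is the kernel of $K_n\otimes_{A_n}M_\sigma\to K_{n-1}\otimes_{A_n}M_\sigma$, i.e.\ of the map $M_\sigma\to M_\sigma^{\oplus n}$ sending $m\mapsto(\pm x_1m,\dots,\pm x_nm)$; hence $\text{Tor}_n^{A_n}(M_\sigma,k)\cong\{m\in M_\sigma:x_im=0\text{ for all }i\}$. A nonzero homogeneous $m$ of degree $v$ in this kernel would force $v\in U_\sigma$ but $v+e_i\notin U_\sigma$ for every $i$, contradicting that $U_\sigma$ is upward closed (as $v+e_i\gtrsim v$); splitting a general element into its homogeneous components, each again in the kernel because $I$ is homogeneous, shows the kernel is $0$. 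This establishes the claim and with it the proposition.

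The only point that genuinely needs attention is the sharpness of the bound: the naive dimension count above only yields vanishing for $j>n+\dim X$, and obtaining the stated range $j\ge n+\dim X$ is exactly what the supplementary identity $\text{Tor}_n^{A_n}(C_p(X_\bullet),k)=0$ buys us — this is where the special nature of the chain modules (direct sums of indicator modules of up-sets, hence with no nonzero element annihilated by all of $I$) is used. Equivalently one could phrase this through depth: $\operatorname{depth}_I M_\sigma\ge 1$, so by Auslander--Buchsbaum $\text{pd}_{A_n}M_\sigma=n-\operatorname{depth}_I M_\sigma\le n-1$. Everything else is routine bookkeeping with the spectral sequence.
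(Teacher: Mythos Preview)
Your proof is correct and follows essentially the same route as the paper's: both use the spectral sequence with $E^1_{pq}=\text{Tor}_q^{A_n}(C_p(X_\bullet),k)$, both observe that the naive bound $j>n+\dim X$ is immediate, and both reduce the sharp case $j=n+\dim X$ to showing $\text{Tor}_n^{A_n}(C_p(X_\bullet),k)=0$ via the Koszul resolution, identifying $\text{Tor}_n$ with the submodule annihilated by every $x_i$ and arguing this vanishes because chains never die. Your version is a bit more explicit---you decompose $C_p(X_\bullet)$ into the simplex-indexed summands $M_\sigma$, invoke Dickson's lemma for finite generation, and phrase ``chains never die'' precisely as upward-closedness of $U_\sigma$---but this is elaboration of the same argument rather than a different one; the Auslander--Buchsbaum remark is a nice alternative packaging that the paper does not mention.
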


 \begin{proof} For $j>n+\dim X$, this follows by noting that in the spectral sequence above, we have $E^1_{pq}=0$ for $p>\dim X$ or $q>n$.
 More is true, however.  We claim that $\text{Tor}_n(C_p(X_\bullet),k)=0$ as well, and this implies that $\tor_{n+\dim
 X}^{A_n}(C_\bullet(X_\bullet),k)=0$.  To see this, let $K_\bullet\to k$ be the Koszul complex, where each term sits in degree $(0,0,\dots
 ,0)\in\zn^n$.  This is a free resolution of $k$ in the category of $n$-graded $A_n$-modules, and so we may use it to compute
 $\text{Tor}_q^{A_n}(C_p(X_\bullet),k)$.  In this way, it is easy to see that
 $$\text{Tor}_n^{A_n}(C_p(X_\bullet),k) = \{\sigma\in C_p(X_\bullet): x_j\sigma=0, j=1,\dots ,n\},$$ and this is clearly zero since chains never
 die (in contrast to homology classes, where $\text{Tor}_n$ could be nonzero).
 \end{proof}

 Heuristically speaking, an element of $\text{Tor}_q^{A_n}(C_p(X_\bullet),k)$, $q>0$ may be thought of as a virtual $(p+q)$-cell in the following
 way.  If $q=1$, we get elements that tell us to identify two $p$-simplices in the space $X$ that are copies of the same cell in different
 filtration levels.  We may view this element as a $(p+1)$-cell that fills in the void created by attaching two copies of the $p$-simplex.  An
 example of this is shown in Figure \ref{spherefig}, where a $2$-cell has been added in filtration levels $(1,2)$ and $(3,0)$ (note that this is a
 cellular filtration, not a simplicial one, but the principles are the same).  The
 corresponding relation gives rise to an element of $\tor_3^{A_2}(C_\bullet(X_\bullet),k)$ in degree $(3,2)$ (coming from an element of
 $\text{Tor}_1^{A_2}(C_2(X_\bullet),k)$) which we can picture as a $3$-cell filling in the sphere created by attaching two copies of the
 $2$-cell to a cylinder.  Elements in $\text{Tor}_q$ may be thought of similarly---higher syzygies arise from syzygies appearing in multiple places,
 and these get filled in by virtual cells.

 \begin{figure}
 \centerline{\includegraphics[height=2.5in]{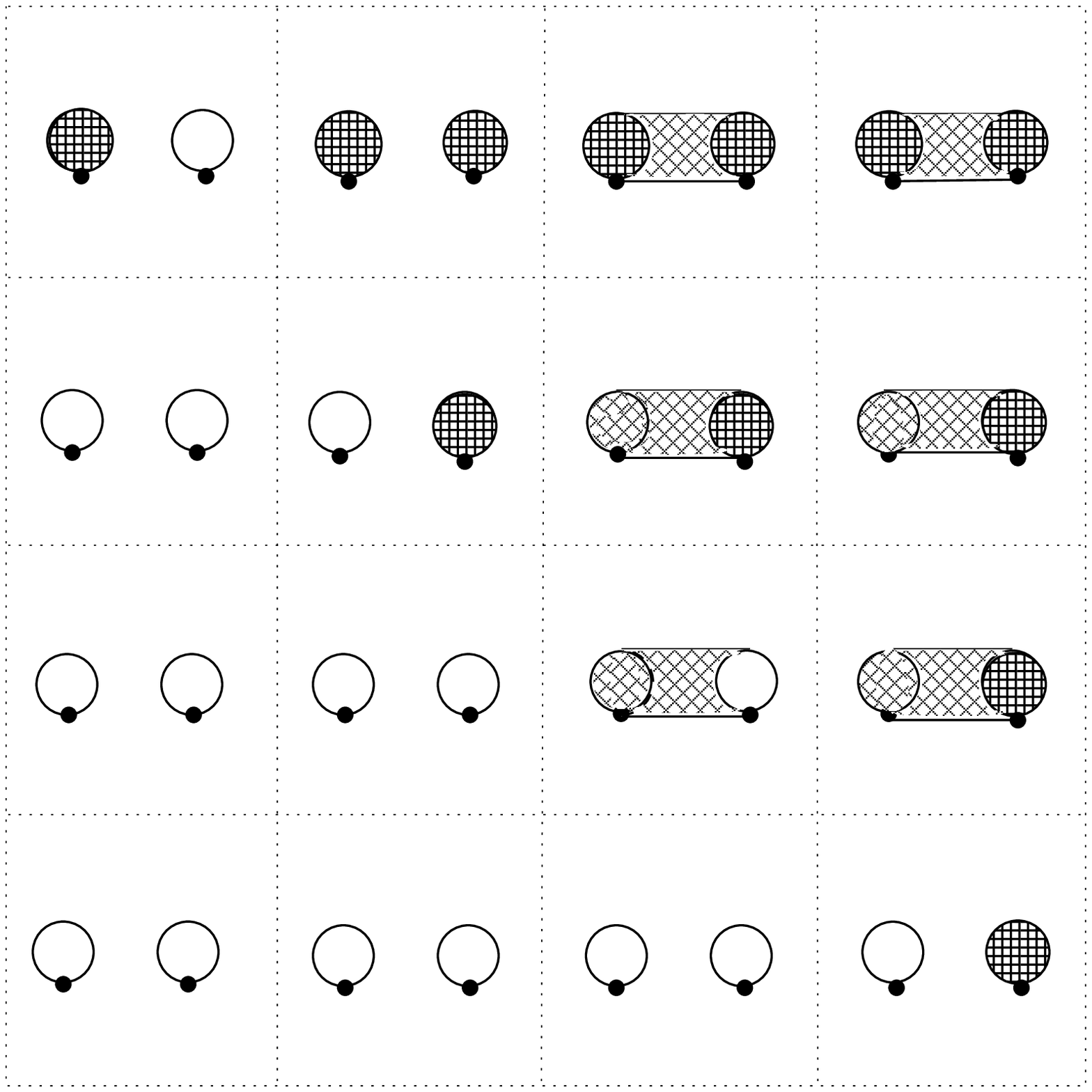}}
 \caption{\label{spherefig} A filtered sphere with duplicated cells in degrees $(1,2)$ and $(3,0)$.  The horizontal edge in the bottom of
 the picture in degree $(2,1)$ is a $1$-cell joining the two vertices. The $2$-cell attached in degree $(2,1)$ creates a
 cylinder with boundary the two circles.}
 \end{figure}

 We may therefore think of the elements of $\tor_j^{A_n}(C_\bullet(X_\bullet),k)$ as $j$-cells, some of which correspond to real $j$-cells in
 the space $X$ (if they come from $\text{Tor}_0^{A_n}(C_j(X_\bullet),k)$), others of which correspond to virtual cells filling in spheres
 created by duplications.  We now define a map $\partial:\tor_\ell^{A_n}(C_\bullet(X_\bullet),k)\to\tor_{\ell-1}^{A_n}(C_\bullet(X_\bullet),k)$,
 ignoring the grading.
 Note that $\tor_\ell=\bop_{i+j=\ell} E_{ij}^\infty$, and that each $E_{ij}^\infty$ is a subquotient of $\text{Tor}_j^{A_n}(C_i(X_\bullet),k)$.
 If $j>0$, we view an element of $E_{ij}^\infty$ as a virtual $(i+j)$-cell giving a relation among elements of $E_{i,j-1}^1$; say $z\in
 E_{ij}^1$ yields a syzygy among elements $z_i$ in $E_{i,j-1}^1$.  We then define $\partial[z]=\sum [z_i]$.  Note that if we do not ignore the
 degrees of the elements of these modules, this would often be the zero map.  However, ignoring the grading, this makes sense. For $j=0$, we
 have elements of $E_{i,0}^\infty$ coming from actual cells in the space $X$.  For such a simplex $\sigma$, define $\partial [\sigma]= [\partial
 \sigma]$, the geometric boundary of $\sigma$.

 While a general description of the $\tor_j$ is unwieldy, we do have the following result. Denote by $e_j$ the element
 $(0,0,\dots ,0,1,0,\dots ,0)\in\zn^n$, where the $1$ is in the $j$-th position.

 \begin{theorem}\label{hypertor} Suppose the filtration $X_\bullet$ is such that for every $v\in\zn^n$, $$\dim_k((C_i(X_\bullet)_{v+e_j}))\le
 1+\dim_k((C_i(X_\bullet)_v),$$ $j=1,\dots ,n$ (that is, we add at most one simplex at a time moving in any coordinate direction).  Then for all
 $\ell\ge 0$, $$\tor_\ell^{A_n}(C_\bullet(X_\bullet),k) = \bop_{i+j=\ell} \text{\em Tor}_j^{A_n}(C_i(X_\bullet),k),$$ and using
 $\partial:\tor_\ell(C_\bullet(X_\bullet),k)\to\tor_{\ell-1}(C_\bullet(X_\bullet),k)$ defined above, we have
 $$H_\bullet(\tor_\bullet(C_\bullet(X_\bullet),k),\partial) \cong H_\bullet(X;k).$$
 \end{theorem}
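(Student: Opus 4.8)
The plan is to run the second spectral sequence of the preceding subsection, $E^1_{pq}=\text{Tor}_q^{A_n}(C_p(X_\bullet),k)\Rightarrow\tor_{p+q}^{A_n}(C_\bullet(X_\bullet),k)$, whose $d^1$ is induced by the simplicial boundary of $C_\bullet(X_\bullet)$, and to show that the ``one simplex at a time'' hypothesis makes it collapse onto its bottom row. The first point to settle is that each $C_i(X_\bullet)$ is a \emph{free} $A_n$-module: one argues that under the hypothesis every $i$-simplex $\sigma$ of $X$ enters the filtration at a single minimal multidegree $m_\sigma\in\zn^n$ and is present exactly on the principal up-set $\{v:m_\sigma\lesssim v\}$, so the cyclic submodule of $C_i(X_\bullet)$ generated by $\sigma$ is the free module $A_n(m_\sigma)$ and $C_i(X_\bullet)\cong\bigoplus_{\sigma}A_n(m_\sigma)$.

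Granting this, $\text{Tor}_q^{A_n}(C_p(X_\bullet),k)=0$ for $q>0$ while $E^1_{p0}=C_p(X_\bullet)\otimes_{A_n}k$, so the spectral sequence is concentrated in the row $q=0$; hence $\tor_\ell^{A_n}(C_\bullet(X_\bullet),k)\cong E^\infty_{\ell 0}=E^2_{\ell 0}=H_\ell\bigl(C_\bullet(X_\bullet)\otimes_{A_n}k,\ \bar\partial\bigr)$, where $\bar\partial$ is the reduction of the simplicial boundary map. The next step is to verify that $\bar\partial$ vanishes: for an $\ell$-simplex $\sigma$ with critical degree $m_\sigma$, the class $\bar\partial[\sigma]$ is a signed sum over those proper faces of $\sigma$ whose critical degree is again $m_\sigma$, and one argues from the hypothesis that every proper face of $\sigma$ enters strictly earlier, so the sum is empty. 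Therefore $\tor_\ell^{A_n}(C_\bullet(X_\bullet),k)\cong C_\ell(X_\bullet)\otimes_{A_n}k=\text{Tor}_0^{A_n}(C_\ell(X_\bullet),k)=\bigoplus_{i+j=\ell}\text{Tor}_j^{A_n}(C_i(X_\bullet),k)$, the last equality holding because every $j>0$ summand is zero; this is the first assertion.

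For the homological statement I would forget the $\zn^n$-grading. As an ungraded $k$-vector space $C_\ell(X_\bullet)\otimes_{A_n}k$ has exactly one basis vector $[\sigma]$ for each $\ell$-simplex $\sigma$ of $X$ (no duplications, since the critical multidegree is unique), so $\tor_\bullet^{A_n}(C_\bullet(X_\bullet),k)$ is canonically the ungraded simplicial chain complex of $X$; and since there are no classes in homological degrees $j>0$, the operator $\partial$ of the statement reduces on every basis class to the $j=0$ clause of its definition, the geometric boundary $[\sigma]\mapsto\sum_i(-1)^i[\partial_i\sigma]$. Thus $(\tor_\bullet^{A_n}(C_\bullet(X_\bullet),k),\partial)$ is isomorphic, as a complex of $k$-vector spaces, to $(C_\bullet(X;k),\partial)$, and passing to homology yields $H_\bullet(\tor_\bullet^{A_n}(C_\bullet(X_\bullet),k),\partial)\cong H_\bullet(X;k)$.

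The main obstacle is this first step: extracting from the stated condition (at most one simplex added per coordinate step) the two structural facts used above --- uniqueness of the critical multidegree $m_\sigma$, which gives freeness of the $C_i(X_\bullet)$ and hence the collapse, and strict domination of $m_\tau$ by $m_\sigma$ for each proper face $\tau$ of $\sigma$, which gives $\bar\partial=0$. Neither is automatic: a simplex may enter along two incomparable multidegrees while no single step ever adds two cells of the same dimension, in which case $C_i(X_\bullet)$ is not free and higher differentials reappear; and a simplex may share a critical multidegree with one of its faces, in which case $\bar\partial\neq 0$. So the real work is either to strengthen the hypothesis to exclude these possibilities, or else to retain the higher $\text{Tor}_q^{A_n}(C_i(X_\bullet),k)$ and track the surviving spectral-sequence differentials explicitly --- the ``virtual cell'' bookkeeping indicated before the statement.
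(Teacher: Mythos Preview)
Your main line---that the hypothesis forces each $C_i(X_\bullet)$ to be free---does not go through, and you correctly diagnose why in your final paragraph: the condition bounds only the number of $i$-simplices added per coordinate step and so does not rule out a single simplex entering at two incomparable multidegrees. Example~1 immediately following the theorem exhibits exactly this: the filtered circle there satisfies the hypothesis, yet both $C_0(X_\bullet)$ and $C_1(X_\bullet)$ carry relations and have nonvanishing $\text{Tor}_1$. So the spectral sequence does not collapse by vanishing of the rows $q>0$, and the direct sum in the statement genuinely contains higher Tor terms; your identification of $\tor_\ell$ with $C_\ell(X_\bullet)\otimes_{A_n}k$ alone is therefore false in the setting of the theorem.

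The paper's route is the alternative you only gesture at. For the first assertion it keeps all the $\text{Tor}_j^{A_n}(C_i(X_\bullet),k)$ and argues that the \emph{horizontal} differential in the double complex (the one induced by the simplicial boundary on a Cartan--Eilenberg resolution) already vanishes after tensoring with $k$, for degree reasons: under the hypothesis boundaries of simplices, relations among duplicated simplices, and higher syzygies all land in strictly smaller multidegrees, hence factor through $I_n$ and die modulo $I_n$. This gives $E^1=E^\infty$ directly, with the higher rows intact. For the second assertion the paper cannot identify $T_\bullet$ with $C_\bullet(X;k)$ since the former is strictly larger; instead it builds an inclusion $\varphi:C_\bullet(X;k)\hookrightarrow T_\bullet$ by choosing one preferred copy of each simplex among its several occurrences in $\text{Tor}_0$, and shows the quotient $Q_\bullet$ is acyclic by exactly the virtual-cell bookkeeping you mention: elements of $\text{Tor}_1$ kill the redundant copies in $\text{Tor}_0/\varphi(C_\bullet)$, elements of $\text{Tor}_2$ kill the remaining cycles from $\text{Tor}_1$, and so on up each column. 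The isomorphism on homology then comes from the long exact sequence of the pair.
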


 \begin{proof} Note that the condition on the filtration implies that in the $E^0$-term of the spectral sequence, the horizontal differential is
 identically zero for degree reasons (i.e., the boundary of any simplex lives in a lower degree, a relation involves objects in lower degrees,
 etc., and so after tensoring with $k$, everything vanishes).  It follows that $E^1=E^\infty$ and so for $\ell\ge 0$,
 $$\tor_\ell^{A_n}(C_\bullet(X_\bullet),k) = \bop_{i+j=\ell}E_{ij}^\infty = \bop_{i+j=\ell}\text{Tor}_j^{A_n}(C_i(X_\bullet),k).$$

 Denote the group $\tor_\ell^{A_n}(C_\bullet(X_\bullet),k)$, with grading dropped, by $T_\ell$.  We have an inclusion of chain complexes
 $$\varphi:C_\bullet(X;k)\to T_\bullet$$ defined as follows.  Let $\sigma$ be a generator of $C_i(X;k)$.  Then
 $\sigma\in\text{Tor}_0^{A_n}(C_i(X_\bullet),k)$, perhaps in multiple locations.  Choose the copy in degree $v=(v_1,\dots ,v_n)$, where
 $v_1,\dots ,v_n$ are minimized in order.  For example, if $n=3$ and $\sigma$ occurs in grades $(0,1,2)$ and $(0,2,1)$, we would choose the
 generator in grade $(0,1,2)$ for $\varphi(\sigma)$.  Since we have dropped the grading in $T_i$, this choice is unimportant and gives a
 well-defined chain map.  Denote by $Q_\bullet$ the quotient complex so that we have an exact sequence
 $$0\to C_\bullet(X;k)\stackrel{\varphi}{\to} T_\bullet\to Q_\bullet \to 0.$$  We claim that $H_\bullet(Q_\bullet)\equiv 0$.

 Note that for each $\ell\ge 0$,
 $$Q_\ell = \bop_{i+j=\ell,j\ge 1} \text{Tor}_j^{A_n}(C_i(X_\bullet),k) \oplus  \text{Tor}_0^{A_n}(C_\ell(X_\bullet),k)/\varphi(C_\ell(X;k)),$$
 and hence $Q_\bullet$ is the total complex of the double complex obtained from $E^\infty$ by taking the quotient of the bottom row by the image
 of $C_\bullet(X;k)$ under $\varphi$.  We claim that the vertical homology of this double complex vanishes so that $H_\bullet(Q_\bullet)\equiv
 0$.  To see this, note that the map $$E_{i,1}^\infty\to E_{i,0}^\infty/\varphi(C_i(X;k))$$ is surjective since the elements in
 $\text{Tor}_1^{A_n}(C_i(X_\bullet),k)$ serve to identify duplications of simplices in $X$.  Since we have set one of these simplices equal to
 zero, the others get hit by the appropriate element of $\text{Tor}_1^{A_n}(C_i(X_\bullet),k)$.  Similarly, a syzygy $z$ in $E_{ij}^\infty$
 corresponds to a virtual $(i+j)$-cell that fills in the $2$ $(i+j-1)$-cells it relates.  It follows that any vertical cycle may be filled with
 a vertical boundary and hence $E^1\equiv 0$, as required.
 \end{proof}

 \subsection{Examples}
 \noindent 1.  Consider the filtered circle shown in Figure \ref{oneatatime}.  The chain groups of this space are:
 \begin{eqnarray*}
 C_0(X_\bullet) & = & \frac{A_2(0,0)\oplus A_2(0,1)\oplus A_2(1,0)\oplus A_2(2,1)\oplus
 A_2(4,0)}{x(0,1,0,0,0)=y(0,0,1,0,0),x^2(0,0,0,1,0)=y(0,0,0,0,1)} \\
 C_1(X_\bullet) & = & \frac{A_2(1,1)\oplus A_2(2,0)\oplus A_2(3,2)\oplus A_2(4,1)\oplus A_2(4,2)}{x(1,0,0,0,0)=y(0,1,0,0,0),
 x(0,0,1,0,0)=y(0,0,0,1,0)}.
 \end{eqnarray*}
 The $E^1=E^\infty$-term of the spectral sequence for computing $\tor_\bullet^{A_n}(C_\bullet(X_\bullet),k)$ is then
 $$\small{\xymatrix{
 k(1,1)\oplus k(4,1) & k(2,1)\oplus k(4,2) \\
 k(0,0)\oplus k(0,1)\oplus k(1,0)\oplus k(2,1)\oplus k(4,0) & k(1,1)\oplus k(2,0)\oplus k(3,2)\oplus k(4,1)\oplus k(4,2).}}$$
 We therefore have $T_0=E^1_{0,0}$, $T_1 = E^1_{0,1}\oplus E^1_{1,0}$, and $T_2=E^1_{1,1}$, and the complex $T_\bullet$ is then
 $$k^2 \stackrel{B}{\to} k^2\oplus k^5 \stackrel{A}{\to} k^5$$ where the matrices $A$ and $B$ are
 $$A=\left(\begin{array}{rrrrrrr}
      0 & 0 & 1 & 1 & 0 & 0 & 1 \\
      1 & 0 & 0 & 0 & 0 & 0 & 0 \\
      -1 & 0 & -1 & -1 & -1 & -1 & 0 \\
      0 & 1 & 0 & 0 & 1 & 1 & 0 \\
      0 & -1 & 0 & 0 & 0 & 0 & -1
      \end{array}\right) \quad B=\left(\begin{array}{rr}
                                  0 & 0 \\
                                  0 & 0 \\
                                  1 & 0 \\
                                  -1 & 0 \\
                                  0 & 1 \\
                                  0 & -1 \\
                                  0 & 0
                                  \end{array}\right).$$
 It is easy to see that $A$ has rank $4$, $B$ has rank $2$, and so the homology of this complex is $k$ in degrees $0$ and $1$, and $0$ in degree
 $2$.  The inclusion $\varphi:C_\bullet(X;k)\to T_\bullet$ takes the three vertices to those in degrees $(0,0)$, $(0,1)$, and $(2,1)$, and the
 three edges to those in degrees $(1,1)$, $(3,2)$, and $(4,2)$.  The map $\varphi$ is a quasi-isomorphism.

 \begin{figure}
 \centerline{\includegraphics[height=2.5in]{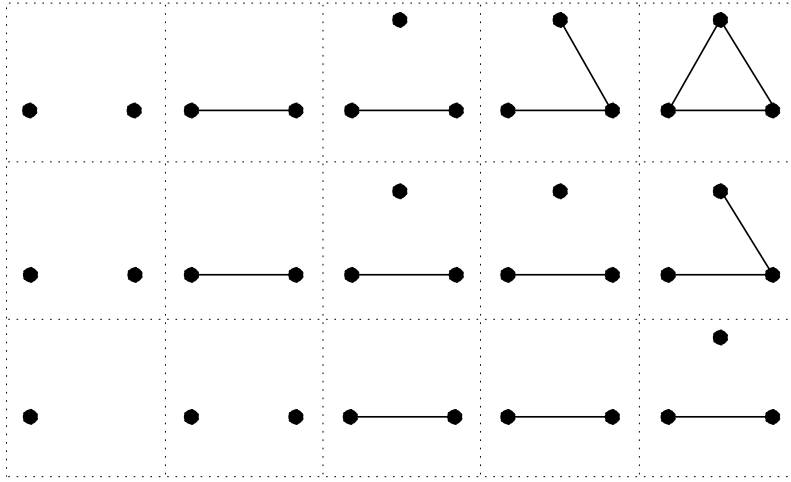}}
 \caption{\label{oneatatime} A circle with one simplex entering at a time}
 \end{figure}

 \medskip

 \noindent 2. Observe that the filtered sphere shown in Figure \ref{spherefig} fails the criterion imposed in the statement of the theorem since we add
 multiple cells when passing to degree $(2,1)$.  However, we may still recover the homology of $X=S^2$ from the hypertor groups.
 The chain groups of this filtered space are as follows:
 \begin{eqnarray*}
 C_0(X_\bullet) & = & A_2(0,0)^2 \\
 C_1(X_\bullet) & = & A_2(0,0)^2 \oplus A_2(2,1) \\
 C_2(X_\bullet) & = & \frac{A_2(0,3)\oplus A_2(1,2)\oplus A_2(2,1)\oplus A_2(3,0)}{x^2(0,1,0,0)=y^2(0,0,0,1)}.
 \end{eqnarray*}
 The spectral sequence for calculating $\tor_\bullet^{A_n}(C_\bullet(X_\bullet),k)$ has $E^1$-term
 $$\xymatrix{
 0  &  0  &  k(3,2) \\
 k(0,0)^2 & k(0,0)^2\oplus k(2,1)\ar[l]_<<<<0 & k(0,3)\oplus k(1,2)\oplus k(2,1)\oplus k(3,0)\ar[l]_<<<<0}$$
 Since the horizontal differential is zero, this is the $E^\infty$-term as well.  Note that we have isomorphisms
 $\varphi:C_0(X;k)\to T_0$ and $\varphi:C_1(X;k)\to T_1$, while the inclusion $\varphi:C_2(X;k)\to T_2$ is given by
 \begin{eqnarray*}
 \sigma_1 & \mapsto & \sigma_1(0,3) \\
 \sigma_2 & \mapsto & \sigma_2(1,2) \\
 \tau & \mapsto & \tau(2,1)
 \end{eqnarray*}
 where $\sigma_1$ is the $2$-cell entering in degree $(0,3)$, $\sigma_2$ enters at $(1,2)$ and $\tau$ enters at $(2,1)$ to create the cylinder.
 For clarity, we have indicated the degree of each element in $T_i$, but we have dropped the grading for calculations. We therefore have
 $Q_0=0$, $Q_1=0$, $Q_2=k(3,0)$, and $Q_3=k(3,2)$.  The map $\partial:Q_3\to Q_2$ is the identity; the group $Q_3$ is generated by a virtual
 $3$-cell filling in the two copies of $\sigma_2$.  Note that $H_\bullet(Q_\bullet)\equiv 0$, and so $\varphi:C_\bullet(X;k)\to T_\bullet$ is a
 quasi-isomorphism.

\end{document}